\newtheorem{theo}{Theorem}[section]
\newtheorem{coro}[theo]{Corollary}
\newtheorem{lemm}[theo]{Lemma}
\newtheorem{prop}[theo]{Proposition}
\theoremstyle{remark}
\newtheorem{rema}[theo]{\bf Remark}
\newtheorem{example}[theo]{\bf Example}
\begin{document}

\title{On the FOD/FOM parameter of rational maps}
\author{Ruben A. Hidalgo}
\address{Departamento de Matem\'atica y Estad\'{\i}stica, Universidad de La Frontera. Temuco, Chile}
\email{ruben.hidalgo@ufrontera.cl}

\subjclass[2010]{37F10, 37P05, 30F30}
\keywords{Field automorphisms, Rational maps, Meromorphic forms, Riemann sphere}

\thanks{Partially supported by Project Fondecyt 1230001}

\begin{abstract}
Let $\chi$ be a (right) action of ${\rm PSL}_{2}({\mathbb L})$ on the space ${\mathbb L}(z)$ of rational maps defined over an algebraically closed field ${\mathbb L}$.
If $R \in {\mathbb L}(z)$ and ${\mathcal M}_{R}^{\chi}$ is its $\chi$-field of moduli, then the parameter ${\rm FOD/FOM}_{\chi}(R)$ is the smallest integer $n \geq 1$ such that there is a $\chi$-field of definition of $R$ being a degree $n$ extension of ${\mathcal M}_{R}^{\chi}$.
When ${\mathbb L}$ has characteristic zero and $\chi=\chi_{\infty}$ is the conjugation action, then it is known that ${\rm FOD/FOM}_{\chi_{\infty}}(R) \leq 2$.
In this paper, we study the above parameter for general actions and any characteristic. 
\end{abstract}

\maketitle

\section{Introduction}
In this paper, ${\mathbb L}$ will denote an algebraically closed field, and ${\rm G}({\mathbb L})$ will be its group of field automorphisms. 
If $\sigma, \tau \in {\rm G}({\mathbb L})$, then we write by $\sigma \tau$ to denote the composition $\sigma \circ \tau$. 
The group ${\rm G}({\mathbb L})$ induces a 
natural (left) action on the field ${\mathbb L}(z)$ of rational maps defined over ${\mathbb L}$:  if $\sigma \in {\rm G}({\mathbb L})$ and
$R \in {\mathbb L}(z)$, then $R^{\sigma} \in {\mathbb L}(z)$ is the rational map obtained by applying $\sigma$ to the coefficients of $R$ (if $\sigma, \tau \in {\rm G}({\mathbb L})$ and $R \in {\mathbb L}(z)$, then $R^{\sigma \tau}=(R^{\tau})^{\sigma}$). This action preserves the natural grading of ${\mathbb L}(z)$ given by the degree of rational maps.

Let $\chi$ be a (right) action of the group of M\"obius transformations (i.e., the group of rational maps of degree one) ${\rm Aut}({\mathbb P}^{1})={\rm PSL}_{2}({\mathbb L})$ on ${\mathbb L}(z)$. We are not requiring the action to preserve the grading. Examples of (right) actions are: (i) (the conjugation action) $\chi_{\infty}(T)(R)=T^{-1} \circ R \circ T$, (ii) (the pull-back action) $\chi_{k}(T)(R)=R(T(z)) (T'(z))^{k}$, where $k \in {\mathbb Z}$. In these cases, the ones preserving the grading are $\chi_{\infty}$ and $\chi_{0}$.

Two rational maps $R,S \in {\mathbb L}(z)$ are $\chi$-equivalent if there is some $T \in {\rm PSL}_{2}({\mathbb L})$ such that $S=\chi(T)(R)$. A subfield ${\mathbb K}$ of ${\mathbb L}$ is called a $\chi$-field of definition of $R \in {\mathbb L}(z)$ if there exists some $S \in {\mathbb K}(z)$ and some $T \in {\rm PSL}_{2}({\mathbb L})$ such that $S=\chi(T)(R)$. Note that $\chi$-equivalent rational maps have the same $\chi$-fields of definition. 

It might happen that, for some $\chi$-equivalent rational maps $R, S \in {\mathbb L}(z)$, and some $\sigma \in {\rm G}({\mathbb L})$, 
the rational maps $R^{\sigma} $ and $S^{\sigma}$ are not $\chi$-equivalent. To avoid this pathology, we will 
assume that the action $\chi$ satisfies the ${\rm G}({\mathbb L})$-property (see Section \ref{Sec:actions}):
for every $T \in {\rm PSL}_{2}({\mathbb L})$, every $\sigma \in {\rm G}({\mathbb L})$ and every $R \in {\mathbb L}(z)$, it holds that $\chi(T)(R)^{\sigma} = \chi(T^{\sigma})(R^{\sigma})$. The actions $\chi_{\infty}$ and $\chi_{k}$ are examples of actions satisfying the ${\rm G}({\mathbb L})$-property.

The ${\rm G}({\mathbb L})$-property, permits us to define the $\chi$-field of moduli ${\mathcal M}_{R}^{\chi}$ of $R$. It holds that $\chi$-equivalent rational maps have the same $\chi$-field of moduli and that ${\mathcal M}_{R}^{\chi}$ is always contained in every $\chi$-field of definition of $R$ (but, in general, ${\mathcal M}_{R}^{\chi}$ is not a $\chi$-field of definition). We define the parameter ${\rm FOD/FOM}_{\chi}(R)$ as the smallest integer $n \geq 1$ such that $R$ has a $\chi$-field of definition ${\mathbb K} \leq {\mathbb L}$ which is an extension of degree $n$ of ${\mathcal M}_{R}^{\chi}$.

The conjugation action $\chi_{\infty}$ has been well-studied and used to understand arithmetic aspects of the dynamics of rational maps (see, for instance, \cite{Manes, Silv1, Silv2}). Assuming ${\mathbb L}$ to be of characteristic zero, 
in \cite{Silv1}, Silverman proved that every rational map $R$, either of even degree or being $\chi_{\infty}$-equivalent to a polynomial, is definable over its $\chi_{\infty}$-field of moduli (so, for these cases, ${\rm FOD/FOM}_{\chi_{\infty}}(R)=1$).
In the same paper, there were provided explicit examples of rational maps, in every odd degree $d \geq 3$, that cannot be defined over their $\chi_{\infty}$-fields of moduli (so, for these examples, 
${\rm FOD/FOM}_{\chi_{\infty}}(R) \geq 2$).
In \cite{Hidalgo}, it was proved that if a rational map (necessarily of odd degree) cannot be defined over its $\chi_{\infty}$-field of moduli, then it can be defined over a suitable quadratic extension of it. As a consequence, for every $R$ it holds that ${\rm FOD/FOM}_{\chi_{\infty}}(R) \leq 2$.
In the same paper, for ${\mathbb L}={\mathbb C}$, it was observed that ${\mathcal M}_{R}^{\chi_{\infty}}$ is contained in the reals if and only if $R$ commutes with an extended M\"obius transformation (i.e., the composition of a M\"obius transformation with complex conjugation) and, in this case, that $R$ can be defined over the reals if and only if such an extended M\"obius transformation can be chosen to be a reflection (i.e., of order two with fixed points).

We may wonder if the above properties, valid for $\chi_{\infty}$, are still true for a general right action $\chi$:
\begin{enumerate}[leftmargin=15pt]
\item[(i)] When is ${\mathcal M}_{R}^{\chi}$ a $\chi$-field of definition for $R$?
\item[(ii)] Does ${\rm FOD/FOM}_{\chi}(R) \leq 2$?
\item If ${\mathbb L}={\mathbb C}$, when does ${\mathcal M}_{R}^{\chi} \leq {\mathbb R}$?
\end{enumerate}

A necessary and sufficient condition for a subfield ${\mathbb K}$ of ${\mathbb L}$ to be a $\chi$-field of definition for $R$ may be stated in terms of the existence of certain $\chi$-coboundaries associated to the group $G({\mathbb L}/{\mathbb K})$ (see Theorem \ref{TeoWeil0}). Unfortunately, detecting the existence of $\chi$-coboundaries is a difficult problem in general.
 But, to detect the existence of $\chi$-cocycles seems to be a more simple task. In the case that ${\mathbb K}$ is a perfect field, and by restricting to certain (right) actions, which we have called universal ones (see Section \ref{Sec:universal}), we observe (see Theorem \ref{teomain}) that the existence of a $\chi$-coclycle for the rational map $R$ asserts the existence of a $\chi$-coboundary either over ${\mathbb K}$ or over a quadratic extension of it (i.e., $R$ is $\chi$-definible over at most a quadratic extension of ${\mathbb K}$). 
 
If the group ${\rm Aut}_{\chi}(R)$ of $\chi$-automorphisms of $R$ (see Section \ref{Sec:actions}) is finite and the $\chi$-field of moduli ${\mathcal M}_{R}^{\chi}$ is perfect (for instance, when ${\mathbb L}$ either has characteristic zero or it is the algebraic closure of finite field), then ${\rm FOD/FOM}_{\chi}(R) \leq 2$ (Theorem \ref{coromain}).

As the conjugation action $\chi_{\infty}$ is an example of a universal one, the above result extends the results in \cite{Hidalgo} to positive characteristic. 

As the pull-back action $\chi_{k}$, for $k \in {\mathbb Z}$, is also a universal one, 
 if ${\mathcal M}_{R}^{\chi_{k}}$ is a perfect field, then ${\rm FOD/FOM}_{\chi}(R) \leq 2$
 (for $k=0$ and ${\mathbb L}={\mathbb C}$ this fact was obtained by Couveignes \cite{Couv} in connection to Belyi maps and Grothendieck's dessins d'enfants \cite{Gro}). 

As previously observed, every rational map, either of even degree or $\chi_{\infty}$-equivalent to a polynomial, can be defined over its $\chi_{\infty}$-field of moduli. In contrast, 
we provide explicit examples of even degree rational maps $R$ (of degree $4k$ if $k \neq 0$, and degree $4$ if $k=0$), for which ${\mathcal M}_{R}^{\chi_{k}}$ is not a $\chi_{k}$-field of definition (Proposition \ref{ejemplo2}) making a different to the case of $\chi_{\infty}$. 
In these provided examples, the pole divisor of the $k$-form $\omega_{R}=R(z) dz^{k}$ has an even degree. We observe that this is the only obstruction for $R$ to be $\chi_{k}$-definable over ${\mathcal M}_{R}^{\chi_{k}}$ (see Corollary \ref{suficiente}).

Also, for ${\mathbb L}={\mathbb C}$, we observe that ${\mathcal M}_{R}^{\chi_{k}}$ is contained in the field ${\mathbb R}$ of real numbers if and only if the $k$-form $\omega_{R}$ is invariant under the pull-back of an extended M\"obius transformation and that ${\mathbb R}$ is a $\chi_{k}$-field of definition if such an extended M\"obius transformation can be chosen to be a reflection (Proposition \ref{notita}).

\medskip
\noindent
{\bf Notations.}
\begin{enumerate}[leftmargin=15pt]
\item If $q$ is a prime integer, then we donote by ${\mathbb F}_{q^{r}}$ the finite field with $q^{r}$ elements. 

\item We will denote by ${\mathcal Q}$ the basic subfield of ${\mathbb L}$ (so, if 
$q \geq 0$ is the characteristic of ${\mathbb L}$, then, up to isomorphisms, ${\mathcal Q}={\mathbb Q}$ for $q=0$ and ${\mathcal Q}={\mathbb F}_{q}$ for $q>0$). 

\item If $R \in {\mathbb L}(z)$ is non-zero, then we set $R^{0}=1$.

\item If ${\mathbb K}$ is a subfield of the field ${\mathbb L}$, then we denote by ${\rm G}({\mathbb L}/{\mathbb K})$ the subgroup of ${\rm G}({\mathbb L})$ consisting of those automorphisms fixing each element of ${\mathbb K}$. 
\end{enumerate}

\section{Preliminaries}
In this section, as in the rest of this paper, ${\mathbb L}$ will denote an algebraically closed field of characteristic $q \geq 0$. The interest cases are when ${\mathbb L}$ is either of characteristic zero or
the algebraic closure of a finite field.

\subsection{The finite groups of ${\rm PSL}_{2}({\mathbb L})$}
We denote by $\mathbb{Z}_n$ the cyclic group of order $n$, by $\ D_n$ the dihedral group of order $2n$, by $\ A_4$ (respectively, $\ A_5$) the alternating group of order $12$ (respectively, of order $60$) and by $\ S_4$ the symmetric group of order $24$.

\begin{theo}[\cite{Dickson, VaMa}]\label{teo1}
If $G$ is a finite subgroup of ${\rm PSL}_{2}({\mathbb L})$. Then, $G$ is isomorphic to one of the following groups 
{\small
$$\mathbb{Z}_n,\ D_n,\ A_4,\ S_4,\ A_5:\ (\text{if $q=0$ or if $|G|$ is prime to $q$}),$$ $${\mathbb Z}_q^t,\ {\mathbb Z}_q^t\rtimes {\mathbb Z}_{m},\ {\rm PGL}_{2}({\mathbb F}_{q^{r}}),\  {\rm PSL}_{2}({\mathbb F}_{q^{r}}):\ (\text{if $|G|$ is divisible by $q$}),$$
} where $m$ is a divisor of $q^t-1$. Moreover, the signature of the quotient orbifold ${\mathbb P}^{1}/G$ is given in the Table \ref{Table1}, where $\alpha=\frac{q^r(q^r-1)}{2}$, $\beta=\frac{q^r+1}{2}$.
{\small
\begin{table}[h]
$$\begin{tabular}{|c|c|c|}
\hline
Case & $G$ & signature of ${\mathbb P}^{1}/G$\\
\hline
$1$ & $\mathbb{Z}_n $ & $(n, n)$\\
$2$ & $D_n$ & $(2, 2, n)$\\
$3$ & $A_4$, $q\not=2, 3$ & $(2, 3, 3)$\\
$4$ & $S_4$, $q\not=2, 3$ & $(2, 3, 4)$\\
$5$ & $A_5$, $q\not=2, 3, 5$ & $(2, 3, 5)$\\
$6$  & $A_5$, $q=3$ & $(6,5 )$\\
$7$ & ${\mathbb Z}_q^t$ & $(q^t)$\\
$8$ & ${\mathbb Z}_q^t\rtimes {\mathbb Z}_m$ & $(mq^t, m)$\\
$9$ & ${\rm PSL}_{2}({\mathbb F}_{q^{r}})$, $q\not=2$ & $(\alpha, \beta)$\\
$10$ & ${\rm PGL}_{2}({\mathbb F}_{q^{r}})$ & $(2\alpha, 2\beta)$\\
\hline
\end{tabular}
 $$
 \vspace{0.3cm}
\caption{Groups and signatures in Theorem \ref{teo1}}\label{Table1}
 \end{table}
 }
\end{theo}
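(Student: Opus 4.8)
The plan is to analyze a finite subgroup $G \leq {\rm PSL}_{2}({\mathbb L})$ through its action on ${\mathbb P}^{1}={\mathbb P}^{1}({\mathbb L})$, splitting the argument according to whether the characteristic $q$ divides $|G|$, and in each case reading the signature of ${\mathbb P}^{1}/G$ directly off the orbit data. The two governing facts are: a semisimple non-trivial element of ${\rm PSL}_{2}({\mathbb L})$ fixes exactly two points of ${\mathbb P}^{1}$, while a non-trivial unipotent element (which has order $q$) fixes exactly one; and the stabilizer in ${\rm PSL}_{2}({\mathbb L})$ of a point of ${\mathbb P}^{1}$ is a Borel subgroup $\{z \mapsto az+b\}$, so any finite subgroup of it splits as a (unipotent) elementary abelian $q$-part extended by a cyclic prime-to-$q$ part.

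First I would treat the tame case ($q=0$ or $q \nmid |G|$), where every non-identity element is semisimple and all point-stabilizers are cyclic. Counting the pairs $(g,P)$ with $g \neq 1$ and $g(P)=P$ in two ways — once as $2(|G|-1)$, and once by summing over the $G$-orbits $\mathcal{O}_{1},\dots,\mathcal{O}_{k}$ of points with non-trivial stabilizer of order $e_{i}$ — yields the classical relation
$$2-\frac{2}{|G|}=\sum_{i=1}^{k}\left(1-\frac{1}{e_{i}}\right).$$
Since the left side lies in $[1,2)$ and each summand is at least $\tfrac12$, one gets $k\in\{2,3\}$, and the finitely many integer solutions force $(e_{1},\dots,e_{k})$ to be $(n,n)$, $(2,2,n)$, $(2,3,3)$, $(2,3,4)$ or $(2,3,5)$; these are exactly the signatures of rows $1$–$5$ of Table \ref{Table1}. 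Identifying the corresponding isomorphism type ($\mathbb{Z}_{n}$, $D_{n}$, $A_{4}$, $S_{4}$, $A_{5}$) then follows from the fact that a genus-zero quotient orbifold of each signature admits an essentially unique realization, so the $e_{i}$ determine $G$ up to isomorphism.

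Next I would handle the wild case ($q\mid|G|$). A Sylow $q$-subgroup $U$ is a finite subgroup of the additive group, hence $U\cong{\mathbb Z}_{q}^{t}$ fixing a unique point $P_{0}\in{\mathbb P}^{1}$; its normalizer preserves $P_{0}$, so the stabilizer $G_{P_{0}}$ is of Borel type $U\rtimes C$ with $C$ cyclic of order $m$ dividing $q^{t}-1$ (the cyclic group acting by multiplication on $U\cong\mathbb{F}_{q^{t}}$). The decisive dichotomy is whether $U\trianglelefteq G$: if so, uniqueness of $P_{0}$ forces $G=G_{P_{0}}$, giving ${\mathbb Z}_{q}^{t}$ or ${\mathbb Z}_{q}^{t}\rtimes{\mathbb Z}_{m}$ (rows $7$–$8$); if not, there are several Sylow $q$-subgroups with distinct fixed points, and Dickson's analysis of the group generated by two such unipotent subgroups produces a subfield ${\mathbb F}_{q^{r}}\subseteq{\mathbb L}$ over which $G$ becomes ${\rm PSL}_{2}({\mathbb F}_{q^{r}})$ or ${\rm PGL}_{2}({\mathbb F}_{q^{r}})$ (rows $9$–$10$). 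I expect this non-normal subcase to be the principal obstacle: ruling out all other possibilities and pinning down the subfield ${\mathbb F}_{q^{r}}$ is precisely the technical core of Dickson's theorem \cite{Dickson}, and is considerably harder than the counting of the tame case.

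Finally, for the signatures of the wild rows I would identify the special orbits directly from the known subgroup structure rather than from the naive counting, since wild ramification breaks the tame formula. For ${\mathbb Z}_{q}^{t}$ the quotient is the additive polynomial totally (wildly) ramified over a single point with stabilizer of order $q^{t}$, giving $(q^{t})$; adjoining the cyclic part contributes the second special point of order $m$, giving $(mq^{t},m)$. For ${\rm PSL}_{2}({\mathbb F}_{q^{r}})$ with $q\neq2$ the two special $G$-orbits are ${\mathbb P}^{1}({\mathbb F}_{q^{r}})$, with Borel stabilizer of order $\alpha=q^{r}(q^{r}-1)/2$, and the conjugate quadratic points, with non-split torus stabilizer of order $\beta=(q^{r}+1)/2$, yielding $(\alpha,\beta)$; passing to ${\rm PGL}_{2}({\mathbb F}_{q^{r}})$ doubles both stabilizers to $(2\alpha,2\beta)$. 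In each case the genus-zero constraint on ${\mathbb P}^{1}/G$, checked against the Riemann--Hurwitz formula with the appropriate higher ramification (different) contributions, confirms that no further branch points occur, which completes the entries of Table \ref{Table1}.
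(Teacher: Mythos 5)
The paper does not prove this statement at all: it is quoted verbatim from the literature (\cite{Dickson, VaMa}), so there is no internal proof to compare against, and your sketch has to stand on its own. Your tame half does: the count of incidences $(g,P)$ with $g\neq 1$ semisimple fixing exactly two points gives the classical relation
$$2-\frac{2}{|G|}=\sum_{i=1}^{k}\Bigl(1-\frac{1}{e_{i}}\Bigr),$$
and the enumeration of its solutions together with the rigidity of the resulting spherical signatures is the standard (and correct) route to rows $1$--$5$.

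In the wild half, however, there is a genuine gap. Your dichotomy --- Sylow $q$-subgroup normal gives rows $7$--$8$, non-normal gives ${\rm PSL}_{2}({\mathbb F}_{q^{r}})$ or ${\rm PGL}_{2}({\mathbb F}_{q^{r}})$ --- is false as stated, and the counterexample sits in the very table you are trying to prove: row $6$ is $A_{5}$ in characteristic $3$ (it embeds since ${\rm PSL}_{2}({\mathbb F}_{9})\cong A_{6}$), whose Sylow $3$-subgroup is non-normal, yet $A_{5}$ is not isomorphic to ${\rm PSL}_{2}({\mathbb F}_{3^{r}})$ or ${\rm PGL}_{2}({\mathbb F}_{3^{r}})$ for any $r$. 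Your argument as written never produces this case, nor its signature $(6,5)$. The same leakage occurs elsewhere (e.g.\ in characteristic $2$ a dihedral group $D_{n}$ with $n$ odd has order divisible by $2$ and a non-normal Sylow $2$-subgroup, but is not of Lie type unless $n=3$), which shows that the list attached to ``$|G|$ divisible by $q$'' cannot be read off from the normal/non-normal dichotomy alone. Moreover, the one step you correctly identify as the technical core --- that two non-commuting unipotent subgroups generate a group of Lie type over a finite subfield, \emph{up to the exceptional cases above} --- is delegated entirely to Dickson rather than argued, so the classification part of the theorem is not actually established by your proposal; only the tame classification and the computation of the signatures in rows $7$--$10$ (which is done correctly, via the Borel and non-split torus stabilizers and wild Riemann--Hurwitz) are genuinely proved.
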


\subsection{Weil's descent theorem}
Let us recall that a smooth projective algebraic variety $Y \subset {\mathbb P}^{n}$ is defined over a subfield ${\mathcal R}$ of ${\mathbb L}$  if its corresponding ideal of polynomials ${\mathbb L}[Y]$ can be generated by a finite collection of polynomials with coefficients in ${\mathcal R}$. We will denote by ${\mathbb L}(Y)$ its field of rational maps defined over ${\mathbb L}$.

Let ${\mathbb K}$ be a perfect subfield of the algebraically closed field ${\mathbb L}$, and set $\Gamma={\rm G}({\mathbb L}/{\mathbb K})$. Assume that $X \subset {\mathbb P}^{n}$ be a given algebraic variety, defined over ${\mathbb L}$.
For each $\sigma \in \Gamma$, we may consider the new algebraic variety $\sigma(X):=X^{\sigma} \subset {\mathbb P}^{n}$. This produces a well-defined action of $\Gamma$ on the set of birational isomorphism classes of algebraic varieties. 

A collection $\{f_{\sigma}:X \to X^{\sigma}\}_{\sigma \in \Gamma}$, of birational isomorphisms defined over ${\mathbb L}$, 
 is called a {\it Weil's datum} for $X$ and ${\mathbb L}/{\mathbb K}$ if it satisfies
(the so-called {\it Weil's co-cycle condition}) $f_{\tau\sigma}=f_{\sigma}^{\tau} \circ f_{\tau} \,\, \mbox{ for each } \,\, \sigma,\tau \in \Gamma.$

Assume $X$ is definable over ${\mathbb K}$, that is, there exists an algebraic variety $Y \subset {\mathbb P}^{m}$, defined over ${\mathbb K}$, and there exists an isomorphism $R:X \to Y$, defined over ${\mathbb L}$. In this case, for each $\sigma \in\Gamma$, we have that 
$Y=Y^{\sigma}$ and that $R^{\sigma}:X^{\sigma} \to Y$ is an isomorphism defined over ${\mathbb K}$, in particular, 
$f_{\sigma}:=(R^{\sigma})^{-1} \circ R: X \to X^{\sigma}$ is is an isomorphism, defined over ${\mathbb L}$. It can be checked that the collection
$\{f_{\sigma}:=(R^{\sigma})^{-1} \circ R\}_{\sigma \in \Gamma}$ is a  Weil's datum for $X$ and ${\mathbb L}/{\mathbb K}$. Weil's descent theorem asserts that this is also a sufficient condition for $X$ to be definable over ${\mathbb K}$.

\begin{theo}[Weil's descent theorem \cite{Weil}]
Let ${\mathbb K}$ be a perfect subfield of the algebraically closed field ${\mathbb L}$. We assume that ${\mathbb L}$ is finitely generated over ${\mathbb K}$. Set $\Gamma={\rm G}({\mathbb L}/{\mathbb K})$.
Let $X$ be an affine algebraic variety defined over ${\mathbb L}$. Then

\begin{enumerate}[leftmargin=15pt]
\item[(a)] If $X$ admits a Weil's datum $\{f_{\sigma}\}_{\sigma \in \Gamma}$ with respect to  ${\mathbb L}/{\mathbb K}$, then there exists an algebraic variety $Y$, defined over ${\mathbb K}$, and there exists a birational isomorphism $R:X \to Y$, defined over ${\mathbb L}$, such that $R=R^{\sigma} \circ f_{\sigma}$ for every $\sigma \in \Gamma$. Moreover, if all the isomorphisms $f_{\sigma}$ are biregular, then $R$ can be chosen to be biregular. 

\item[(b)] If there is another birational isomorphism  $\hat{R}:X \to \hat{Y}$, defined over ${\mathbb L}$, where $\hat{Y}$ is defined over ${\mathbb K}$, such that $\hat{R}=\hat{R}^{\sigma} \circ f_{\sigma}$ for every $\sigma \in \Gamma$, then there exists a birational isomorphism $J:Y \to \hat{Y}$, defined over ${\mathbb K}$, such that $\hat{R}=J \circ R$.
\end{enumerate}
\end{theo}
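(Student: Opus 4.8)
The plan is to convert this geometric descent statement into the descent of a \emph{semilinear Galois action}, and then invoke the classical Galois descent for such actions (Speiser's theorem, the additive form of Hilbert 90). First I would pass to function fields (for the birational part) or to coordinate rings (for the biregular refinement). Write $F={\mathbb L}(X)$. For each $\sigma\in\Gamma$ there is a canonical $\sigma$-semilinear isomorphism $\theta_{\sigma}\colon F\to {\mathbb L}(X^{\sigma})$ obtained by applying $\sigma$ to coefficients, together with the ${\mathbb L}$-linear comorphism $f_{\sigma}^{*}\colon {\mathbb L}(X^{\sigma})\to F$ of the birational map $f_{\sigma}$. Setting $g_{\sigma}:=f_{\sigma}^{*}\circ\theta_{\sigma}$ gives a field automorphism of $F$ with $g_{\sigma}|_{{\mathbb L}}=\sigma$. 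The essential preliminary computation is that Weil's cocycle condition $f_{\tau\sigma}=f_{\sigma}^{\tau}\circ f_{\tau}$ is \emph{exactly} equivalent to $\sigma\mapsto g_{\sigma}$ being a genuine semilinear action of $\Gamma$ on $F$ (so $\{g_{\sigma}\}$ is an honest action and not merely a cocycle), using the naturality of the maps $\theta_{\sigma}$ with respect to the $\Gamma$-action on ${\mathbb L}$.

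With the semilinear action in hand, I would apply Galois descent. Let $F_{0}=F^{\Gamma}$ be the fixed field. Since ${\mathbb K}$ is perfect, ${\mathbb L}/{\mathbb K}$ is Galois with ${\mathbb L}^{\Gamma}={\mathbb K}$, and the descent theorem yields that the natural map ${\mathbb L}\otimes_{{\mathbb K}}F_{0}\to F$ is an isomorphism, that $F_{0}\cap{\mathbb L}={\mathbb K}$, and that $F_{0}$ is finitely generated over ${\mathbb K}$. Because ${\mathbb L}/{\mathbb K}$ may be infinite, the honest work is a reduction to the finite case: the Weil datum is determined by its restriction to a finite subextension ${\mathbb E}/{\mathbb K}$ over which $X$ and the finitely many relevant $f_{\sigma}$ are already defined, so classical descent for the finite group ${\rm G}({\mathbb E}/{\mathbb K})$ applies, after which one base-changes back to ${\mathbb L}$. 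Consequently $F_{0}={\mathbb K}(Y)$ for a variety $Y$ defined over ${\mathbb K}$, and the inclusion $F_{0}\subset F$ together with ${\mathbb L}\,F_{0}=F$ produces the desired birational isomorphism $R\colon X\to Y$ over ${\mathbb L}$. For the biregular refinement I would run the identical argument on algebras, replacing $F$ by the coordinate ring ${\mathbb L}[X]$: the fixed subring $({\mathbb L}[X])^{\Gamma}$ is then a finitely generated ${\mathbb K}$-algebra with ${\mathbb L}\otimes_{{\mathbb K}}({\mathbb L}[X])^{\Gamma}\cong {\mathbb L}[X]$, which defines $Y$ over ${\mathbb K}$ and makes $R$ biregular. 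I expect this descent step, and in particular the reduction to a finite Galois subextension, to be the main obstacle; everything else is bookkeeping.

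It then remains to check the compatibility $R=R^{\sigma}\circ f_{\sigma}$. By construction $R$ intertwines the twisted action $g_{\sigma}$ on $F$ with the standard action on ${\mathbb L}(Y)$ (which is $\sigma$ on ${\mathbb L}$ and trivial on ${\mathbb K}(Y)=F_{0}$); unwinding this intertwining is precisely the asserted identity, equivalently $R^{\sigma}=R\circ f_{\sigma}^{-1}$.

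For part (b) I would argue directly by exhibiting $J:=\hat{R}\circ R^{-1}\colon Y\to\hat{Y}$, a birational isomorphism defined over ${\mathbb L}$, and verifying that it is $\Gamma$-invariant. From $R=R^{\sigma}\circ f_{\sigma}$ we obtain $R^{\sigma}=R\circ f_{\sigma}^{-1}$, and likewise $\hat{R}^{\sigma}=\hat{R}\circ f_{\sigma}^{-1}$; hence $J^{\sigma}=\hat{R}^{\sigma}\circ(R^{\sigma})^{-1}=(\hat{R}\circ f_{\sigma}^{-1})\circ(f_{\sigma}\circ R^{-1})=\hat{R}\circ R^{-1}=J$ for every $\sigma\in\Gamma$. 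Since ${\mathbb L}^{\Gamma}={\mathbb K}$ and ${\mathbb K}$ is perfect, a $\Gamma$-invariant birational isomorphism is defined over ${\mathbb K}$, so $J$ is defined over ${\mathbb K}$, and $\hat{R}=J\circ R$ holds by the very definition of $J$. This completes the proof.
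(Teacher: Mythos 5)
There is nothing in the paper to compare your argument against: this theorem is imported verbatim from Weil's 1956 paper, cited and used as a black box, with no proof given. Judged on its own, your sketch is the standard modern proof by Galois descent of semilinear actions and is essentially correct. The translation of the cocycle condition $f_{\tau\sigma}=f_{\sigma}^{\tau}\circ f_{\tau}$ into the statement that $g_{\sigma}=f_{\sigma}^{*}\circ\theta_{\sigma}$ is an honest semilinear action does check out with the paper's convention $R^{\sigma\tau}=(R^{\tau})^{\sigma}$ (one finds $g_{\sigma}\circ g_{\tau}=g_{\sigma\tau}$); Speiser's theorem then gives ${\mathbb L}\otimes_{{\mathbb K}}F^{\Gamma}\cong F$ with $F^{\Gamma}$ finitely generated over ${\mathbb K}$ and linearly disjoint from ${\mathbb L}$, the intertwining unwinds to $h\circ R^{\sigma}\circ f_{\sigma}=h\circ R$ for all $h\in{\mathbb K}(Y)$ and hence to $R=R^{\sigma}\circ f_{\sigma}$, the coordinate-ring variant handles the biregular refinement, and your two-line computation for (b) is complete as written. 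Two remarks. First, the step you single out as the main obstacle, the reduction to a finite subextension, is the one place where rigor is genuinely needed if ${\mathbb L}/{\mathbb K}$ were infinite, because the definition of a Weil datum imposes no continuity on $\sigma\mapsto f_{\sigma}$; however, under the hypotheses as literally stated (${\mathbb L}$ algebraically closed \emph{and} finitely generated over ${\mathbb K}$) the Artin--Schreier theorem forces $[{\mathbb L}:{\mathbb K}]\le 2$, so only the finite Galois case is ever at issue, and this is the form in which the paper applies the theorem after replacing ${\mathbb L}$ by ${\mathbb L}_{R,{\mathbb K}}$. Second, Weil's original argument runs along entirely different lines (generic points and specializations in the language of his \emph{Foundations}); your function-field route is the cleaner one for the formulation used here, at the cost of proving descent only up to birational equivalence unless one works with the coordinate rings as you indicate.
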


\subsection{A normalization for rational maps}
We make a normalization of the rational maps as follows. If $R \in {\mathbb L}(z) \setminus \{0\}$, then $R(z)=P(z)/Q(z)$, where $P(z), Q(z) \in {\mathbb L}[z] \setminus \{0\}$ with $P(z)$ and $Q(z)$ being relatively prime polynomials. The choices of $P(z)$ and $Q(z)$ are not unique, but any other choice is obtained by multiplying $P(z)$ and $Q(z)$ by a non-zero number. We normalize our choices so that the leading coefficient of $P$ is $1$ (then $P$ and $Q$ are uniquely determined by $R$). In this case, we say that $R$ is {\it normalized}.

If $\sigma \in {\rm G}({\mathbb L}/{\mathbb K})$ satisfies that $R^{\sigma}=R$, then we know the existence of a non-zero number $\lambda_{\sigma} \in {\mathbb L}$ so that $P^{\sigma}(z)=\lambda_{\sigma} P(z)$ and $Q^{\sigma}(z)=\lambda_{\sigma} Q(z)$. Under the assumption that $R$ is normalized, $\lambda_{\sigma}=1$, that is, each coefficient of $P(z)$ and $Q(z)$ is fixed by $\sigma$.

\subsection{An auxiliary lemma}

\begin{lemm}\label{lemaaux}
Let $B$ be a smooth algebraic curve of genus zero, defined over a subfield ${\mathbb K}$ of ${\mathbb L}$, and assume there is a non-constant ational map $S \in {\mathbb L}(B)$ which is defined over ${\mathbb K}$. Then $B$ has a ${\mathbb K}_{1}$-rational point, where ${\mathbb K}_{1}$ is either equal to ${\mathbb K}$ or it is a quadratic extension of it.
\end{lemm}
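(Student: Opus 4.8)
The plan is to use the classical description of smooth genus-zero curves as conics and to produce, directly over $\mathbb{K}$, an effective divisor of degree two; from such a divisor a closed point of degree at most two — equivalently, a $\mathbb{K}_1$-rational point with $[\mathbb{K}_1:\mathbb{K}]\le 2$ — can be extracted.

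First I would replace $B$ by its smooth projective model over $\mathbb{K}$, which is again a smooth geometrically connected curve of genus zero; here the hypothesis that there is a non-constant $S \in {\mathbb K}(B)$ is what guarantees that ${\mathbb K}(B)$ is a genuine one-variable function field over $\mathbb{K}$ with $\mathbb{K}$ algebraically closed in it, so that $B$ is a $\mathbb{K}$-form of $\mathbb{P}^1$ and its canonical class behaves as expected. Being canonical, the class $K_B$ is defined over $\mathbb{K}$, and since $\deg K_B = 2g-2 = -2$, the anticanonical class $-K_B$ is a degree-two class defined over $\mathbb{K}$. By Riemann--Roch for genus zero one computes $h^0(-K_B) = 3$, this linear system being defined over $\mathbb{K}$; choosing a nonzero global section that is defined over $\mathbb{K}$ (they span a three-dimensional $\mathbb{K}$-space), its divisor of zeros is an effective $\mathbb{K}$-rational divisor $D$ on $B$ of degree exactly two. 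Equivalently, $|-K_B|$ embeds $B$ as a smooth conic in $\mathbb{P}^2$ over $\mathbb{K}$, and $D$ is its intersection with a $\mathbb{K}$-rational line.

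Next I would extract a point from $D$. Decomposing the $\mathbb{K}$-rational effective divisor $D$ into closed points over $\mathbb{K}$, its total degree two forces one of three cases: $D$ contains a closed point of degree one (a $\mathbb{K}$-rational point, so $\mathbb{K}_1 = \mathbb{K}$), $D$ equals twice a degree-one point (again $\mathbb{K}_1 = \mathbb{K}$), or $D$ is a single closed point of degree two, whose residue field is a quadratic extension $\mathbb{K}_1$ of $\mathbb{K}$ and which therefore furnishes a $\mathbb{K}_1$-rational point of $B$. In every case $B$ acquires a $\mathbb{K}_1$-rational point with $[\mathbb{K}_1:\mathbb{K}] \in \{1,2\}$. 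Since $|-K_B|$ is base-point-free and at least two-dimensional, I can moreover choose the section so that $D$ avoids the finitely many points of the projective model lying outside the original curve, ensuring the point produced is an honest point of $B$.

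The main obstacle is producing the degree-two $\mathbb{K}$-rational divisor cleanly over an arbitrary base field $\mathbb{K}$, which here may be imperfect, finite, or of characteristic two: one must verify that the anticanonical system is defined over $\mathbb{K}$ and possesses a $\mathbb{K}$-rational section, and one must read ``quadratic extension'' as a degree-two field extension, allowing inseparable ones in characteristic two. Once the $\mathbb{K}$-rational divisor of degree two is in hand, extracting a closed point of degree at most two is essentially formal; the genuine content — and the source of the bound $2$ — is precisely that a genus-zero curve over $\mathbb{K}$ always carries a canonical $\mathbb{K}$-rational zero-cycle of degree two, but need not carry one of degree one.
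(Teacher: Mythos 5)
Your proof is correct, but it takes a genuinely different route from the paper's. The paper actually uses the hypothesis on $S$: it takes the ${\mathbb K}$-rational divisor of fixed points of $S$, of degree $d+1$, and adds positive multiples of the canonical divisor (degree $-2$) to reach a ${\mathbb K}$-rational divisor of degree $1$ (when $d$ is even) or $2$ (when $d$ is odd), then extracts the point by identifying ${\mathbb K}_{1}$ as the fixed field of the stabilizer $\Gamma_{p}\leq {\rm G}({\mathbb L}/{\mathbb K})$, which has index at most two. You discard $S$ altogether and work with the anticanonical class, which is always ${\mathbb K}$-rational of degree $2$. Each route buys something. The paper's parity bookkeeping is exactly what is exploited later in Corollary \ref{suficiente}, where an odd-degree ${\mathbb K}$-rational divisor forces an honest ${\mathbb K}$-rational point; your argument, always landing in degree $2$, cannot see that refinement. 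Conversely, your argument shows the hypothesis on $S$ is superfluous for the lemma as stated (every genus-zero curve over ${\mathbb K}$ is anticanonically a conic, hence carries a closed point of degree at most $2$); it makes explicit, via Riemann--Roch, the passage from a divisor class of nonnegative degree to an effective representative, a step the paper elides when it writes ${\mathcal E}=[p]$ or $[p]+[q]$ for a divisor obtained by adding a negative divisor to $D$; and by reading off ${\mathbb K}_{1}$ as the residue field of a degree-two closed point rather than as a Galois fixed field, it works verbatim over imperfect ${\mathbb K}$, where the ``quadratic extension'' may be inseparable. Both are valid proofs of the statement.
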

\begin{proof}
The divisor $D$ of fixed points of $S$ has degree $d+1$, where $d \geq 1$ is the degree of $S$, and it is ${\mathbb K}$-rational. As $B$ is ${\mathbb K}$-rational, we may find a ${\mathbb K}$-rational canonical divisor ${\mathcal K}$ on $B$. As $B$ has genus zero, ${\mathcal K}$ has degree $-2$. So, by adding to $D$ some positive integer multiple of ${\mathcal K}$, we may find a divisor ${\mathcal E}$ of degree $1$ (when $d$ is even) or $2$ (when $d$ is odd) which is ${\mathbb K}$-rational. 
If ${\mathcal E}=[p]$ or ${\mathcal E}=2[p]$, then $p$ is a ${\mathbb K}$-rational point.
If ${\mathcal E}=[p]+[q]$, $p \neq q$, then we may consider the subgroup $\Gamma_{p}$ of $\Gamma={\rm G}({\mathbb L}/{\mathbb K})$ formed of those $\sigma$ with the property that $\sigma(p)=p$. Either $\Gamma_{p}$ is equal to $\Gamma$, or it is an index two subgroup. If ${\mathbb K}_{1}$ is the fixed field of $\Gamma_{p}$, then $p$ is a ${\mathbb K}_{1}$-rational point.
\end{proof}

\section{Fields of definition and field of moduli of right actions}\label{Sec:actions}
Let us consider a (right) action of ${\rm PSL}_{2}({\mathbb L})$ on ${\mathbb L}(z)$
{\small
$$\chi:{\rm PSL}_{2}({\mathbb L}) \times {\mathbb L}(z) \to {\mathbb L}(z): (T,R) \mapsto \chi(T)(R),$$
}
that is, 
\begin{enumerate}[leftmargin=15pt]
\item[(1)] $\chi(I)(R)=R$, for every $R \in {\mathbb L}(z)$, and 

\item[(2)] $\chi(AB)(R)=\chi(B)(\chi(A)(R))$, for every $A,B \in {\rm PSL}_{2}({\mathbb L})$ and every $R \in {\mathbb L}(z)$. 
\end{enumerate}

Two rational maps 
$R,S \in {\mathbb L}(z)$ are called {\it $\chi$-equivalent} if there exists $T \in {\rm PSL}_{2}({\mathbb L})$ with $S=\chi(T)(R)$; we denote this shortly by $S\equiv_{\chi} R$.

\subsection{The ${\rm \bf G}({\mathbb L})${\bf -property}}
In general, if $S\equiv_{\chi} R$ and $\sigma \in {\rm G}({\mathbb L})$, then it is not clear that $R^{\sigma}\equiv_{\chi}S^{\sigma}$. To avoid such a problem, we assume the following ${\rm G}({\mathbb L})$-compatibility property.

\medskip
\noindent
${\rm \bf G}({\mathbb L})${\bf -property:}
{\it The action $\chi$ is said to satisfy the ${\rm G}({\mathbb L})$-property if: for every $\sigma \in {\rm G}({\mathbb L})$, every $T \in {\rm PSL}_{2}({\mathbb L})$ and every $R \in {\mathbb L}(z)$, it holds that 
$\left( \chi(T)(R)\right)^{\sigma}=\chi(T^{\sigma})(R^{\sigma}).$
}

\begin{rema}
The ${\rm G}({\mathbb L})$-property
ensures that, if $R \equiv_{\chi} S$ and 
$\sigma \in {\rm G}({\mathbb L})$, then $R^{\sigma} \equiv_{\chi} S^{\sigma}$.
\end{rema}

\subsection{The $\chi$-fields of definition}
A {\it $\chi$-field of definition} of $R \in {\mathbb L}(z)$ (or just a field of definition if the $\chi$-action is clear) is a subfield ${\mathbb K}$ of ${\mathbb L}$ such that there is some $S \in {\mathbb K}(z)$ with $S\equiv_{\chi} R$, that is, if there is some $T \in {\rm PSL}_{2}({\mathbb L})$ such that $S=\chi(T)(R)$.
If, moreover, $\chi$ satisfies the ${\rm G}({\mathbb L})$-property, then 
for every  $\sigma \in {\rm G}({\mathbb L})$, it holds that
{\small
$$\chi(T^{\sigma})(R^{\sigma})=\left( \chi(T)(R)\right)^{\sigma}=S^{\sigma}=S=\chi(T)(R).$$
}

\begin{rema}
Note that, under the ${\rm G}({\mathbb L})$-property, $\chi$-equivalent rational maps have the same $\chi$-fields of definition.
\end{rema}

\subsection{The $\chi$-field of moduli}
If $R \in {\mathbb L}(z)$, then we set 
{\small
$$
U_{R}^{\chi}:=\{ \sigma \in {\rm G}({\mathbb L}): R^{\sigma} \equiv_{\chi} R\} \subset {\rm G}({\mathbb L}).
$$
}

\begin{lemm}\label{lema21}
Let $R \in {\mathbb L}(z)$.
If $\chi$ satisfies the ${\rm G}({\mathbb L})$-property, then 
$U_{R}^{\chi}$
is a subgroup of ${\rm G}({\mathbb L})$. Moreover, if $R \equiv_{\chi} S$, then $U_{R}^{\chi}=U_{S}^{\chi}$.
\end{lemm}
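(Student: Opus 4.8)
The plan is to check the three subgroup axioms for $U_R^\chi$ by hand, feeding the two action axioms (1) and (2) of $\chi$ together with the ${\rm G}(\mathbb{L})$-property into the defining condition $R^\sigma \equiv_\chi R$, and then to deduce the equality $U_R^\chi = U_S^\chi$ formally from the fact that $\equiv_\chi$ is an equivalence relation. I note first that $\equiv_\chi$ is indeed an equivalence relation: reflexivity comes from axiom (1) (take $T=I$), symmetry from passing to $T^{-1}$ via axiom (2), and transitivity from composing the two Möbius transformations via axiom (2). In particular the identity of ${\rm G}(\mathbb{L})$ lies in $U_R^\chi$.

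For closure, suppose $\sigma,\tau \in U_R^\chi$, say $R^\sigma=\chi(A)(R)$ and $R^\tau=\chi(B)(R)$ with $A,B \in {\rm PSL}_2(\mathbb{L})$. Since the ${\rm G}(\mathbb{L})$-action satisfies $R^{\sigma\tau}=(R^\tau)^\sigma$, I would apply $\sigma$ to the second relation and use the ${\rm G}(\mathbb{L})$-property to get $R^{\sigma\tau}=(\chi(B)(R))^\sigma=\chi(B^\sigma)(R^\sigma)=\chi(B^\sigma)(\chi(A)(R))$, and then collapse the last expression with axiom (2) to $\chi(AB^\sigma)(R)$. This exhibits $R^{\sigma\tau}\equiv_\chi R$, so $\sigma\tau \in U_R^\chi$. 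For inverses, from $R^\sigma=\chi(A)(R)$ I would apply $\sigma^{-1}$: the left side becomes $R^{\sigma^{-1}\sigma}=R$, while the ${\rm G}(\mathbb{L})$-property turns the right side into $\chi(A^{\sigma^{-1}})(R^{\sigma^{-1}})$. Undoing this with $\chi((A^{\sigma^{-1}})^{-1})$ (legitimate by axioms (1) and (2)) yields $R^{\sigma^{-1}}=\chi((A^{\sigma^{-1}})^{-1})(R)\equiv_\chi R$, so $\sigma^{-1}\in U_R^\chi$.

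For the second assertion, assume $R\equiv_\chi S$ and take $\sigma \in U_R^\chi$. By the Remark following the ${\rm G}(\mathbb{L})$-property, $R\equiv_\chi S$ forces $R^\sigma \equiv_\chi S^\sigma$. Chaining the equivalences $S^\sigma \equiv_\chi R^\sigma \equiv_\chi R \equiv_\chi S$ (the middle one because $\sigma \in U_R^\chi$, the last because $R\equiv_\chi S$) and invoking transitivity gives $S^\sigma \equiv_\chi S$, i.e.\ $\sigma \in U_S^\chi$; hence $U_R^\chi \subseteq U_S^\chi$. Swapping the roles of $R$ and $S$ gives the reverse inclusion and therefore the equality.

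The computations are short, so the only genuine obstacle is notational bookkeeping: $\chi$ is a \emph{right} action while ${\rm G}(\mathbb{L})$ acts on the left with the reversed-exponent convention $R^{\sigma\tau}=(R^\tau)^\sigma$, and the ${\rm G}(\mathbb{L})$-property mixes the two through the twisted transformation $T^\sigma$. Care is needed to ensure that the Möbius factors land on the correct side (producing $AB^\sigma$ rather than $B^\sigma A$ in the closure step, and $(A^{\sigma^{-1}})^{-1}$ in the inverse step), since this is exactly where the interplay of the two actions is used.
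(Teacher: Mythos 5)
Your proof is correct and follows essentially the same route as the paper: the identity is immediate, closure is obtained by applying $\sigma$ to $R^\tau\equiv_\chi R$ via the ${\rm G}({\mathbb L})$-property to get $R^{\sigma\tau}\equiv_\chi R^\sigma\equiv_\chi R$, inverses by applying $\sigma^{-1}$, and the equality $U_R^\chi=U_S^\chi$ by chaining $S^\sigma\equiv_\chi R^\sigma\equiv_\chi R\equiv_\chi S$. You merely make explicit the Möbius transformations (e.g.\ the factor $AB^\sigma$ in the closure step) that the paper leaves implicit, which is a harmless elaboration.
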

\begin{proof}
If $e \in {\rm G}({\mathbb L})$ denotes the identity, then $R^{e}=R$, so $e \in U_{R}^{\chi}$. If $\sigma \in U_{R}^{\chi}$, then $R^{\sigma} \equiv_{\chi} R$ asserts that 
$R \equiv_{\chi} R^{\sigma^{-1}}$, so $\sigma^{-1} \in U_{R}^{\chi}$.
If $\sigma, \tau \in U_{R}^{\chi}$, then $R^{\tau} \equiv_{\chi} R$, so by the ${\rm G}({\mathbb L})$-property, we have $R^{\sigma\tau} \equiv_{\chi} R^{\sigma} \equiv_{\chi} R$, so $\sigma \tau \in U_{R}^{\chi}$. 
For the last part, if $\sigma \in U_{R}^{\chi}$, then
$S \equiv_{\chi} R \equiv_{\chi} R^{\sigma} \equiv_{\chi} S^{\sigma}$, i.e., $\sigma \in U_{S}^{\chi}$
\end{proof}
 
As a consequence of Lemma \ref{lema21}, if $\chi$ satisfies the ${\rm G}({\mathbb L})$-property and $R \in {\mathbb L}(z)$, then 
$U_{R}^{\chi}$ is a subgroup of ${\rm G}({\mathbb L})$. The fixed field 
{\small
$$
{\mathcal M}_{R}^{\chi}={\rm Fix}(U_{R}^{\chi})<{\mathbb L}
$$
}
is  called the {\it $\chi$-field of moduli} (or just the field of moduli if the $\chi$-action is clear) of $R$.

\begin{rema} If $\chi$ satisfies the ${\rm G}({\mathbb L})$-property, then the following facts hold.
(1) $\chi$-equivalent rational maps have the same $\chi$-field of moduli. 
(2) Every $\chi$-field of definition of $R$ contains ${\mathcal M}_{R}^{\chi}$. In fact, if  ${\mathbb K} \leq {\mathbb L}$, $R \in {\mathbb K}(z)$ and $\sigma \in {\rm G}({\mathbb L})$, then $R^{\sigma}=R$ and  $\sigma \in U_{R}^{\chi}$. It might be that the $\chi$-field of moduli ${\mathcal M}_{R}^{\chi}$ is not a $\chi$-field of definition. 
\end{rema}

\subsection{$\chi$-cocycles and $\chi$-fields of definitions}
Necessary and sufficient conditions for a subfield ${\mathbb K}$ of ${\mathbb L}$ to be a $\chi$-field of definition of a rational map $R \in {\mathbb L}(z)$ can be described in terms of certain cocycles.
A $\chi$-cocycle, for $R$ with respect to the extension ${\mathbb L}/{\mathbb K}$, is a function 
{\small
$$
C:{\rm G}({\mathbb L}/{\mathbb K}) \to {\rm PSL}_{2}({\mathbb L}): \sigma \mapsto C(\sigma)=T_{\sigma},
$$
}
such that:
\begin{enumerate}[leftmargin=15pt]
\item[(i)] $R^{\sigma}=\chi(T_{\sigma})(R),$ for every $\sigma \in {\rm G}({\mathbb L}/{\mathbb K})$, and
\item[(ii)] $T_{\sigma\tau}=T_{\sigma} \circ T_{\tau}^{\sigma},$ for every $\sigma, \tau \in {\rm G}({\mathbb L}/{\mathbb K}).$
\end{enumerate}

Fort short, we will identify the above $\chi$-cocycle with the collection $\{T_{\sigma}\}_{\sigma \in {\rm G}({\mathbb L}/{\mathbb K})}$. Note that the collection $\{f_{\sigma}=T_{\sigma}^{-1}\}_{\sigma \in {\rm G}({\mathbb L}/{\mathbb K})}$ determines a Weil's datum.

The above $\chi$-cocycle is called a $\chi$-coboundary if there is some $T \in {\rm PSL}_{2}({\mathbb L})$ such that $T_{\sigma}=T^{-1} \circ T^{\sigma}$. The following asserts that ${\mathbb K}$ is a $\chi$-field of definition of $R$ if and only if there is $\chi$-coboundary for the extension ${\mathbb L}/{\mathbb K}$.

\begin{theo}\label{TeoWeil0}
Let $\chi$ be a (right) action on ${\mathbb L}(z)$ satisfying the ${\rm G}({\mathbb L})$-property.
A subfield ${\mathbb K}$ of ${\mathbb L}$ is a $\chi$-field of definition of $R$ if and only if 
there exists $\chi$-coboundary for $R$ with respect to the extension ${\mathbb L}/{\mathbb K}$. 
\end{theo}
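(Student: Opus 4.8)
The plan is to prove both implications by directly connecting $\chi$-coboundaries to the existence of a model of $R$ over ${\mathbb K}$, using the normalization machinery already set up in the excerpt together with Weil's descent theorem. The statement is an ``if and only if,'' so I would split the argument accordingly.

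\textbf{The easy direction (coboundary $\Rightarrow$ field of definition).} Suppose $\{T_\sigma\}_{\sigma\in{\rm G}({\mathbb L}/{\mathbb K})}$ is a $\chi$-coboundary, so there is $T\in{\rm PSL}_2({\mathbb L})$ with $T_\sigma=T^{-1}\circ T^\sigma$ for all $\sigma$. Set $S:=\chi(T)(R)$; the goal is to show $S\in{\mathbb K}(z)$, i.e.\ that $S^\sigma=S$ for every $\sigma\in{\rm G}({\mathbb L}/{\mathbb K})$ (since ${\mathbb K}$ is to be a $\chi$-field of definition and $S\equiv_\chi R$ by construction). First I would compute $S^\sigma$ using the ${\rm G}({\mathbb L})$-property: $S^\sigma=\bigl(\chi(T)(R)\bigr)^\sigma=\chi(T^\sigma)(R^\sigma)$. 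By the cocycle condition (i), $R^\sigma=\chi(T_\sigma)(R)=\chi(T^{-1}T^\sigma)(R)$, and then applying $\chi(T^\sigma)$ and using the right-action composition rule (2) from Section~\ref{Sec:actions} gives $S^\sigma=\chi\bigl((T^{-1}T^\sigma)(T^\sigma)^{-1}T\bigr)^{-1}\!\cdots$, which I would carefully collapse to $\chi(T)(R)=S$ using associativity in ${\rm PSL}_2({\mathbb L})$. The point is that the coboundary form of $T_\sigma$ is engineered precisely so that the $T^\sigma$ factors cancel. Hence $S^\sigma=S$ for all $\sigma$, so $S$ has coefficients fixed by all of ${\rm G}({\mathbb L}/{\mathbb K})$; since ${\mathbb K}$ is the fixed field of this group, $S\in{\mathbb K}(z)$ and ${\mathbb K}$ is a $\chi$-field of definition of $R$.

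\textbf{The hard direction (field of definition $\Rightarrow$ coboundary).} Assume ${\mathbb K}$ is a $\chi$-field of definition, so there exist $S\in{\mathbb K}(z)$ and $T\in{\rm PSL}_2({\mathbb L})$ with $S=\chi(T)(R)$. I would set $T_\sigma:=T^{-1}\circ T^\sigma$ and verify this defines a $\chi$-cocycle, which automatically has coboundary form. Condition (ii), the cocycle relation $T_{\sigma\tau}=T_\sigma\circ T_\tau^\sigma$, is a purely formal identity for maps of the form $T^{-1}T^\sigma$ and follows from $(T^\tau)^\sigma=T^{\sigma\tau}$ as noted in the Introduction. The substantive step is condition (i): I must check $R^\sigma=\chi(T_\sigma)(R)$. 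Starting from $S=\chi(T)(R)$ and applying $\sigma$, the ${\rm G}({\mathbb L})$-property yields $S^\sigma=\chi(T^\sigma)(R^\sigma)$; since $S\in{\mathbb K}(z)$ we have $S^\sigma=S=\chi(T)(R)$, so $\chi(T^\sigma)(R^\sigma)=\chi(T)(R)$. I would then invert and rearrange, again using rule (2), to isolate $R^\sigma=\chi(T^{-1}T^\sigma)(R)=\chi(T_\sigma)(R)$, as desired.

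\textbf{Anticipated obstacle.} The only genuine subtlety is bookkeeping with the \emph{right} action: because $\chi(AB)=\chi(B)\circ\chi(A)$, the order in which M\"obius factors appear reverses when composing, so the algebra relating $S^\sigma$, $R^\sigma$, and $T_\sigma$ must be done with care to land on exactly $T^{-1}T^\sigma$ rather than its inverse or a transpose. The connection to Weil's datum flagged in the excerpt—namely that $\{f_\sigma=T_\sigma^{-1}\}$ satisfies the Weil co-cycle condition—provides an independent consistency check: the coboundary $T_\sigma=T^{-1}T^\sigma$ corresponds to $f_\sigma=(T^\sigma)^{-1}T=(R^\sigma)^{-1}\circ R$ in Weil's notation, matching the trivial (split) datum arising from a model over ${\mathbb K}$, so Weil's descent theorem can alternatively be invoked to furnish the geometric content. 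I expect no further difficulty, since both directions reduce to formal manipulation under the ${\rm G}({\mathbb L})$-property and the definition of the fixed field.
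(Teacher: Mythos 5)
Your overall strategy is the same as the paper's: both directions are formal computations using the ${\rm G}({\mathbb L})$-property together with the right-action rule $\chi(A\circ B)(R)=\chi(B)(\chi(A)(R))$, and no descent machinery is needed. However, the one step you leave as ``carefully collapse'' is precisely the step that does not close with your choices, and it is the entire content of the proof rather than bookkeeping. In the direction (coboundary $\Rightarrow$ field of definition) you set $S:=\chi(T)(R)$. Then
$$S^{\sigma}=\chi(T^{\sigma})(R^{\sigma})=\chi(T^{\sigma})\bigl(\chi(T_{\sigma})(R)\bigr)=\chi(T_{\sigma}\circ T^{\sigma})(R)=\chi\bigl(T^{-1}\circ T^{\sigma}\circ T^{\sigma}\bigr)(R),$$
which is not $\chi(T)(R)$ in general: the $T^{\sigma}$ factors accumulate rather than cancel. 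The correct model is $S:=\chi(T^{-1})(R)$, for which
$$S^{\sigma}=\chi\bigl((T^{\sigma})^{-1}\bigr)\bigl(\chi(T_{\sigma})(R)\bigr)=\chi\bigl(T^{-1}\circ T^{\sigma}\circ(T^{\sigma})^{-1}\bigr)(R)=\chi(T^{-1})(R)=S,$$
and this is exactly what the paper does. (You also still need the normalization convention for rational maps set up in the preliminaries to pass from $S^{\sigma}=S$ to the statement that the coefficients of $S$ are fixed by $\sigma$; you assert this without it.)

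The same inversion affects your converse. Starting from $S=\chi(T)(R)\in{\mathbb K}(z)$, the identity $\chi(T^{\sigma})(R^{\sigma})=\chi(T)(R)$ rearranges to $R^{\sigma}=\chi\bigl(T\circ(T^{\sigma})^{-1}\bigr)(R)$, not to $R^{\sigma}=\chi(T^{-1}\circ T^{\sigma})(R)$ as you claim. The conclusion survives, because $T\circ(T^{\sigma})^{-1}=\hat{T}^{-1}\circ\hat{T}^{\sigma}$ for $\hat{T}:=T^{-1}$, so one still obtains a coboundary after renaming; the paper sidesteps the issue by writing the hypothesis as $R=\chi(T)(S)$ from the outset. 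In short: right idea and the same route as the paper, but the key cancellation fails as literally written, and both directions must be repaired by replacing $T$ with $T^{-1}$ in the appropriate places.
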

\begin{proof}
We assume $R$ to be normalized.
Let us first assume that $R$ is $\chi$-definable over ${\mathbb K}$. It follows, from the definition of $\chi$-field of definition of $R$, the existence of a M\"obius transformation $T \in {\rm PSL}_{2}({\mathbb L})$ and some $S \in {\mathbb K}(z)$ so that $R=\chi(T)(S)$. If $\sigma \in {\rm G}({\mathbb L}/{\mathbb K})$, then $S^{\sigma}=S$ and, for $T_{\sigma}=T^{-1} \circ T^{\sigma}$, it holds that 
{\small
$$R^{\sigma}=(\chi(T)(S))^{\sigma}=\chi(T^{\sigma})(S^{\sigma})=\chi(T^{\sigma})(S)=\chi(T \circ T_{\sigma})(S)=\chi(T_{\sigma})(\chi(T)(S))=\chi(T_{\sigma})(R).$$
}

Conversely, assume there exists $T \in {\rm PSL}_{2}({\mathbb L})$ such that, for every $\sigma \in {\rm G}({\mathbb L}/{\mathbb K})$, it holds the equality 
$R^{\sigma}=\chi(T_{\sigma})(R),$ where $T_{\sigma}=T^{-1} \circ T^{\sigma}$. If $S=\chi(T^{-1})(R)$, then $S \equiv_{\chi} R$.
If $\sigma \in {\rm G}({\mathbb L}/{\mathbb K})$, then
{\small
$$S^{\sigma}=(\chi(T^{-1})(R))^{\sigma}=\chi((T^{\sigma})^{-1})(R^{\sigma})=\chi((T \circ T_{\sigma})^{-1})(R^{\sigma})=$$
$$
=\chi(T_{\sigma}^{-1} \circ T^{-1})(R^{\sigma})=\chi(T^{-1})(\chi(T_{\sigma}^{-1})(R^{\sigma}))=\chi(T^{-1})(R)=S.$$
}

Now, the above asserts that $S^{\sigma}=S$, for every $\sigma \in  {\rm G}({\mathbb L}/{\mathbb K})$, in particular, by our normalization, $S \in {\mathbb K}(z)$.
\end{proof}

\subsection{The $\chi$-automorphism group of a rational map}
If $R \in {\mathbb L}(z)$, then the $\chi$-group of automorphisms is 
the following subgroup of ${\rm PSL}_{2}({\mathbb L})$
{\small
$$
{\rm Aut}_{\chi}(R)=\{ T \in {\rm PSL}_{2}({\mathbb L}): \chi(T)(R)=R\}.
$$
}

In many cases, for $R$ of degree at least two, the above group is finite (for instance, when $\chi \in \{\chi_{k}: k \in {\mathbb Z} \cup \{\infty\}\}$).

\subsection{Existence of $\chi$-cocycles}
If $R \in {\mathbb L}(z)$ (say in normalized form) and ${\mathbb K}$ is a subfield of ${\mathbb L}$, then we denote by ${\mathbb L}_{R,{\mathbb K}} \leq {\mathbb L}$ the algebraic closure of the subfield generated by ${\mathbb K}$ and the coefficients of $R$. This algebraically closed field ${\mathbb L}_{R,{\mathbb K}}$ is a finite degree extension of ${\mathbb K}$.

\begin{lemm}\label{finitedegreeextension}
Let $\chi$ be a (right) action on ${\mathbb L}(z)$ satisfying the ${\rm G}({\mathbb L})$-property and ${\mathbb K}$ be a perfect subfield of ${\mathbb L}$. 
Let $R \in {\mathbb L}(z)$ (say to be normalized) be such that ${\rm Aut}_{\chi}(R)$ is finite. 
Assume that, for every $\sigma \in {\rm G}({\mathbb L}/{\mathbb K})$, there is a $\hat{T}_{\sigma} \in {\rm PSL}_{2}({\mathbb L})$ such that 
$R^{\sigma}=\chi(T_{\sigma})(R)$. 
Then, for every $\sigma \in {\mathbb L}/{\mathbb K}$, the transformation $\hat{T}_{\sigma}$ is defined over ${\mathbb L}_{R,{\mathbb K}}$. 
\end{lemm}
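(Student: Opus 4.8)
The plan is to characterize the property ``$\hat{T}_{\sigma}$ is defined over ${\mathbb L}_{R,{\mathbb K}}$'' in Galois-theoretic terms and then exhibit $\hat{T}_{\sigma}$ as a member of a finite set of M\"obius transformations that is invariant under ${\rm G}({\mathbb L}/{\mathbb L}_{R,{\mathbb K}})$. First I would record the standing fact that, since ${\mathbb L}_{R,{\mathbb K}}$ is algebraically closed (hence perfect) and sits inside the algebraically closed field ${\mathbb L}$, its fixed field ${\rm Fix}({\rm G}({\mathbb L}/{\mathbb L}_{R,{\mathbb K}}))$ equals ${\mathbb L}_{R,{\mathbb K}}$ itself. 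Consequently, a transformation $T\in{\rm PSL}_{2}({\mathbb L})$ lies in ${\rm PSL}_{2}({\mathbb L}_{R,{\mathbb K}})$ if and only if it is fixed by every $\rho\in{\rm G}({\mathbb L}/{\mathbb L}_{R,{\mathbb K}})$: normalizing a matrix representative so that one nonzero entry equals $1$ removes the projective ambiguity (as $\rho(1)=1$), so ``fixed by $\rho$'' can be read entrywise, and an entry with finite ${\rm G}({\mathbb L}/{\mathbb L}_{R,{\mathbb K}})$-orbit is algebraic over ${\mathbb L}_{R,{\mathbb K}}$, hence lies in it. Thus it suffices to produce $\hat{T}_{\sigma}$ inside a finite ${\rm G}({\mathbb L}/{\mathbb L}_{R,{\mathbb K}})$-invariant set.

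To this end I would introduce the solution set $S_{\sigma}=\{T\in{\rm PSL}_{2}({\mathbb L}):\chi(T)(R)=R^{\sigma}\}$, which contains $\hat{T}_{\sigma}$. Using the right-action axioms, if $\chi(T)(R)=\chi(T')(R)$ then applying $\chi(T^{-1})$ gives $R=\chi(T'T^{-1})(R)$, so $T'T^{-1}\in{\rm Aut}_{\chi}(R)$; hence $S_{\sigma}={\rm Aut}_{\chi}(R)\cdot\hat{T}_{\sigma}$ is a single left coset, and, because ${\rm Aut}_{\chi}(R)$ is assumed finite, $S_{\sigma}$ is finite. Next I would check invariance: for $\rho\in{\rm G}({\mathbb L}/{\mathbb L}_{R,{\mathbb K}})$ both $R$ and $R^{\sigma}$ are defined over ${\mathbb L}_{R,{\mathbb K}}$, so $R^{\rho}=R$ and $(R^{\sigma})^{\rho}=R^{\sigma}$; applying $\rho$ to $\chi(T)(R)=R^{\sigma}$ and invoking the ${\rm G}({\mathbb L})$-property yields $\chi(T^{\rho})(R^{\rho})=(R^{\sigma})^{\rho}$, that is $\chi(T^{\rho})(R)=R^{\sigma}$, so $T^{\rho}\in S_{\sigma}$. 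Therefore ${\rm G}({\mathbb L}/{\mathbb L}_{R,{\mathbb K}})$ permutes the finite set $S_{\sigma}$, the coordinates of each of its elements have finite orbit and are thus algebraic over the algebraically closed field ${\mathbb L}_{R,{\mathbb K}}$, so each element of $S_{\sigma}$, and in particular $\hat{T}_{\sigma}$, is defined over ${\mathbb L}_{R,{\mathbb K}}$.

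The step I expect to be most delicate is the invariance claim, because it rests on $R^{\sigma}$ being defined over ${\mathbb L}_{R,{\mathbb K}}$, and this is exactly where I would use that the coefficients of $R$ are algebraic over ${\mathbb K}$ (so that ${\mathbb L}_{R,{\mathbb K}}$ is the relative algebraic closure of ${\mathbb K}$ in ${\mathbb L}$ and is therefore stable under all of ${\rm G}({\mathbb L}/{\mathbb K})$): if instead some $\sigma$ moved a coefficient of $R$ outside ${\mathbb L}_{R,{\mathbb K}}$, then $R^{\sigma}$ would have coefficients outside ${\mathbb L}_{R,{\mathbb K}}$ and, for the conjugation model $\chi_{\infty}$ say, no conjugating $\hat{T}_{\sigma}$ could be defined over ${\mathbb L}_{R,{\mathbb K}}$, so the stability of ${\mathbb L}_{R,{\mathbb K}}$ is genuinely needed rather than cosmetic. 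The other essential hypothesis is the finiteness of ${\rm Aut}_{\chi}(R)$: it is what makes $S_{\sigma}$ finite and hence forces its points to be algebraic, and therefore rational, over ${\mathbb L}_{R,{\mathbb K}}$; without it the ${\rm G}({\mathbb L}/{\mathbb L}_{R,{\mathbb K}})$-orbit of a solution could be infinite and the argument would break down.
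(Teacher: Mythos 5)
Your argument is essentially the paper's own: the paper likewise takes $\tau\in{\rm G}({\mathbb L}/{\mathbb L}_{R,{\mathbb K}})$, notes $R^{\tau}=R$ and $(R^{\sigma})^{\tau}=R^{\sigma}$, deduces that $\hat{T}_{\sigma}^{-1}\circ\hat{T}_{\sigma}^{\tau}\in{\rm Aut}_{\chi}(R)$ so that the Galois orbit of $\hat{T}_{\sigma}$ lies in a coset of the finite group ${\rm Aut}_{\chi}(R)$ (your set $S_{\sigma}$), and concludes that the entries of $\hat{T}_{\sigma}$ are algebraic over the algebraically closed field ${\mathbb L}_{R,{\mathbb K}}$, hence lie in it. The only place you go beyond the paper is in justifying $(R^{\sigma})^{\tau}=R^{\sigma}$ by assuming the coefficients of $R$ are algebraic over ${\mathbb K}$ --- be aware that this is not among the lemma's stated hypotheses (the paper simply asserts the identity without comment, though its proof needs the same stability of ${\mathbb L}_{R,{\mathbb K}}$ under $\sigma$ that you invoke), so you have correctly isolated, but not removed, the one genuinely delicate step.
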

\begin{proof}
Let $\sigma \in {\mathbb L}/{\mathbb K}$. For every $\tau \in {\rm G}({\mathbb L}/{\mathbb L}_{R,{\mathbb K}})$, we note that $R^{\tau}=R$, 
$(R^{\sigma})^{\tau}=R^{\sigma}$, and 
$T_{\sigma}^{-1} \circ T_{\sigma}^{\tau} \in {\rm Aut}_{\chi}(R)$. As ${\rm Aut}_{\chi}(R)$ is finite, the above asserts that $T_{\sigma}$ must be defined over a finite extension of ${\mathbb L}_{R,{\mathbb K}}$, so over it.
\end{proof}

\begin{rema}
The above lemma asserts that, under the assumption that ${\rm Aut}_{\chi}(R)$ is finite, even if ${\mathbb L}$ is not finitely generated over the lower field ${\mathbb K}$, we may replace ${\mathbb L}$ with ${\mathbb L}_{R,{\mathbb K}}$ to apply Weil's descent theorem.
 This observation will be used frequently each time we apply Weil's descent theorem.
\end{rema}

The following result asserts the existence of $\chi$-cocycles in some cases. 

\begin{theo}\label{teocociclo}
Let $\chi$ be a (right) action on ${\mathbb L}(z)$ satisfying the ${\rm G}({\mathbb L})$-property.
Let ${\mathbb K}$ be a perfect subfield of ${\mathbb L}$. 
Let $R \in {\mathbb L}(z)$ be such that its group of $\chi$-automorphisms ${\rm Aut}_{\chi}(R)$
is finite. Assume that, for every $\sigma \in {\rm G}({\mathbb L}/{\mathbb K})$, there is a $\hat{T}_{\sigma} \in {\rm PSL}_{2}({\mathbb L})$ such that 
$R^{\sigma}=\chi(T_{\sigma})(R)$. Then
\begin{enumerate}
\item If ${\rm Aut}_{\chi}(R)$ is not a non-trivial cyclic group, then there is a $\chi$-cocycle for $R$ with respect to the extension ${\mathbb L}/{\mathbb K}$.

\item If ${\rm Aut}_{\chi}(R)$ is a non-trivial cyclic group, then there is subfield ${\mathbb K}_{1}$ of ${\mathbb L}$ which is either equal to ${\mathbb K}$ or a quadratic extension of it, and there is a $\chi$-cocycle for $R$ with respect to the extension ${\mathbb L}/{\mathbb K}_{1}$.
\end{enumerate}
\end{theo}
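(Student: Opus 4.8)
The plan is to reduce the construction of a $\chi$-cocycle to a Galois-descent problem for the finite group $A:={\rm Aut}_{\chi}(R)$, viewed as a finite subgroup of ${\rm PSL}_{2}({\mathbb L})$, and then to settle that problem with the classification in Theorem \ref{teo1} and the auxiliary Lemma \ref{lemaaux}. First I would normalize $R$ and measure how far the $\hat{T}_{\sigma}$ are from being unique: if $R^{\sigma}=\chi(T)(R)=\chi(T')(R)$, then applying $\chi(T^{-1})$ and using $\chi(AB)=\chi(B)\circ\chi(A)$ gives $T'T^{-1}\in A$, so for each fixed $\sigma$ the admissible choices form the left coset $A\hat{T}_{\sigma}$. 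Applying $\sigma$ to $\chi(C)(R)=R$ and invoking the ${\rm G}({\mathbb L})$-property yields ${\rm Aut}_{\chi}(R^{\sigma})=A^{\sigma}$, while the coset computation gives ${\rm Aut}_{\chi}(R^{\sigma})=\hat{T}_{\sigma}^{-1}A\hat{T}_{\sigma}$; hence the key structural relation $A^{\sigma}=\hat{T}_{\sigma}^{-1}A\hat{T}_{\sigma}$. By Lemma \ref{finitedegreeextension} each $\hat{T}_{\sigma}$ is defined over ${\mathbb L}_{R,{\mathbb K}}$, so I may assume ${\mathbb L}/{\mathbb K}$ finite and $\Gamma:={\rm G}({\mathbb L}/{\mathbb K})$ finite, which puts both Weil's descent theorem and ordinary group cohomology at my disposal.

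Next I would phrase the goal cohomologically. A $\chi$-cocycle is a choice $T_{\sigma}=a_{\sigma}\hat{T}_{\sigma}$ with $a_{\sigma}\in A$ and $T_{\sigma\tau}=T_{\sigma}T_{\tau}^{\sigma}$. Writing $c(\sigma,\tau)=\hat{T}_{\sigma\tau}(\hat{T}_{\sigma}\hat{T}_{\tau}^{\sigma})^{-1}$, the ${\rm G}({\mathbb L})$-property together with $R^{\sigma\tau}=(R^{\tau})^{\sigma}$ shows $c(\sigma,\tau)\in A$, and the existence of a cocycle becomes the solvability of
\[
c(\sigma,\tau)=a_{\sigma\tau}^{-1}\,a_{\sigma}\,(\sigma\ast a_{\tau}),\qquad \sigma\ast a:=\hat{T}_{\sigma}\,a^{\sigma}\,\hat{T}_{\sigma}^{-1}\in A.
\]
The assignment $\sigma\mapsto(\sigma\ast)$ is a homomorphism $\Gamma\to{\rm Out}(A)$ for which $c$ is the natural obstruction cochain; since this band is already realized inside ${\rm PSL}_{2}({\mathbb L})$, its triviality is controlled by the abelian group $H^{2}(\Gamma,Z(A))$ with the induced action. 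Geometrically the same reduction appears by passing to quotients: the induced maps on $B':={\mathbb P}^{1}/A$ are independent of the coset representative, so $\{\overline{\hat{T}_{\sigma}^{-1}}\}$ is an honest Weil datum, and by Weil's descent theorem $B'$ descends to a genus-zero curve over ${\mathbb K}$ carrying a ${\mathbb K}$-rational branch divisor whose cone orders are read off Table \ref{Table1}.

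Then I would run the case analysis. If $Z(A)$ is trivial then $H^{2}(\Gamma,Z(A))=0$ and a cocycle exists over ${\mathbb K}$; this covers $A_{4},S_{4},A_{5}$, the odd dihedral groups, and the ${\rm PSL}_{2},{\rm PGL}_{2}$ cases. For the positive-characteristic groups with a normal $q$-subgroup (the ${\mathbb Z}_{q}^{t}$ and ${\mathbb Z}_{q}^{t}\rtimes{\mathbb Z}_{m}$ situations) the relevant center is of additive type in characteristic $q$, and one expects its higher cohomology to vanish by the additive (Artin--Schreier) nature of these groups, again giving a cocycle over ${\mathbb K}$. For the even dihedral groups the center is only ${\mathbb Z}_{2}$, but the signature $(2,2,n)$ supplies a cone point of a unique order (or, when $n=2$, an odd-degree branch divisor), hence a ${\mathbb K}$-rational point on $B'$, over which the rigidity of the corresponding branched cover lets one write a model over ${\mathbb K}$; this completes part (1). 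Finally, if $A={\mathbb Z}_{n}$ is non-trivial cyclic of order prime to the characteristic, its two fixed points are exactly the two cone points of the signature $(n,n)$ and form a ${\mathbb K}$-rational divisor of degree $2$; Lemma \ref{lemaaux} then produces a ${\mathbb K}_{1}$-rational point with $[{\mathbb K}_{1}:{\mathbb K}]\le 2$, over which the displayed equation becomes solvable, giving part (2). The wild cyclic group ${\mathbb Z}_{q}$ has signature $(q)$ with a single cone point and already falls under the first case with ${\mathbb K}_{1}={\mathbb K}$, consistently with the statement.

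The hard part will be the passage from the non-abelian, $A$-valued obstruction $c$ to the abelian statement in $H^{2}(\Gamma,Z(A))$, and then checking uniformly that the only genuinely obstructed configuration is the tame cyclic one with two interchangeable fixed points. In particular the even-dihedral and positive-characteristic cases, where the center need not vanish, must be disposed of by the geometric rigidity of the branch divisor rather than by cohomological vanishing alone; and throughout one must keep careful track of the perfectness of ${\mathbb K}$ (and of ${\mathbb K}_{1}$) demanded by Weil's theorem.
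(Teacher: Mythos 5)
Your reduction to a splitting problem is set up correctly: a $\chi$-cocycle is exactly a homomorphic section of the extension $1\to A\to E\to \Gamma\to 1$, where $E=\{(a\hat{T}_{\sigma},\sigma): a\in A,\ \sigma\in\Gamma\}\leq {\rm PSL}_{2}({\mathbb L})\rtimes\Gamma$. But the step you lean on for most of part (1) --- ``if $Z(A)$ is trivial then $H^{2}(\Gamma,Z(A))=0$ and a cocycle exists'' --- is a genuine gap, and you half-admit it in your last paragraph. The group $H^{2}(\Gamma,Z(A))$ only classifies extensions inducing a fixed outer action \emph{as a torsor}; its vanishing says the extension $E$ is \emph{unique}, not that it is \emph{split}. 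Centerless kernels admit non-split extensions (e.g.\ the non-split extension of ${\mathbb Z}_{2}$ by $A_{6}$ realized by the Mathieu group $M_{10}$), so triviality of $Z(A)$ proves nothing here. Likewise, the appeal to ``the additive (Artin--Schreier) nature'' of ${\mathbb Z}_{q}^{t}$ in characteristic $q$ is not an argument: $H^{2}$ of a finite group with coefficients in an elementary abelian $q$-group is typically nonzero (already $H^{2}({\mathbb Z}_{q},{\mathbb Z}_{q})\neq 0$ for the trivial action), and in any case the non-abelian splitting issue above would remain even if it vanished. So the cases $A_{4},S_{4},A_{5}$, odd dihedral, ${\rm PSL}_{2}({\mathbb F}_{q^{r}})$, ${\rm PGL}_{2}({\mathbb F}_{q^{r}})$, ${\mathbb Z}_{q}^{t}$ and ${\mathbb Z}_{q}^{t}\rtimes{\mathbb Z}_{m}$ are not actually proved in your write-up.

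The fix is that the geometric mechanism you reserve for the even dihedral and cyclic cases is in fact the whole proof, and it works uniformly; this is exactly what the paper does. One descends the quotient $\pi:{\mathbb P}^{1}\to{\mathbb P}^{1}/A$ via the induced (genuinely unique, hence Weil-cocyclic) maps $S_{\sigma}$ to a genus-zero curve $\hat{B}$ over ${\mathbb K}$ carrying the ${\mathbb K}$-rational branch divisor. Table \ref{Table1} shows that in every non-cyclic case this divisor contains either a single cone point of distinguished order or (for ${\mathbb Z}_{2}^{2}$) has odd degree $3$, so after adding a multiple of a ${\mathbb K}$-rational canonical divisor one gets a ${\mathbb K}$-rational point on $\hat{B}$, hence infinitely many; choosing one, $b$, that is \emph{not} a branch value of $\hat{L}\circ\pi$ and a lift $p$ of it, the deck group acts freely on the fibre over $b$, so there is a \emph{unique} $h_{\sigma}\in{\rm Aut}_{\chi}(R^{\sigma})$ with $h_{\sigma}(\hat{T}_{\sigma}^{-1}(p))=\sigma(p)$, and $T_{\sigma}=\hat{T}_{\sigma}\circ h_{\sigma}^{-1}$ is then pinned down canonically enough to satisfy the cocycle identity. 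This unmarked-point rigidity, not cohomological vanishing, is what kills the obstruction; your cyclic case (degree-$2$ branch divisor, Lemma \ref{lemaaux}, at most quadratic ${\mathbb K}_{1}$) is correct and matches the paper.
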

\begin{proof}
By Lemma \ref{finitedegreeextension}, we may assume that ${\mathbb L}$ is a finitely generated extension of ${\mathbb K}$.

(1) Let us first assume that ${\rm Aut}_{\chi}(R)$ is not a non-trivial cyclic group.
If ${\rm Aut}_{\chi}(R)$ is the trivial group, then the provided collection $\{\hat{T}_{\sigma}\}_{\sigma \in {\rm G}({\mathbb L}/{\mathbb K})}$ is already a $\chi$-cocycle as desired. So, let us assume, from now on, that ${\rm Aut}_{\chi}(R)$ is non-trivial (as listed in Theorem \ref{teo1}).
Let us consider a branched regular cover $\pi:{\mathbb P}^{1} \to {\mathbb P}^{1}$, with deck group ${\rm Aut}_{\chi}(R)$, whose branch values are contained in the set $\{\infty,0,1\}$. Note that, for each $\sigma \in {\rm G}({\mathbb L}/{\mathbb K})$, the branch values of $\pi^{\sigma}$ are again contained inside $\{\infty,0,1\}$.
In cases (2)--(5), in Table \ref{Table1}, the three values are the branch values of $\pi$.  If  ${\rm Aut}_{\chi}(R)$ is not isomorphic to ${\mathbb Z}_{2}^{2}$, one of these branch values, say $\infty$,  has branch order diffrent from the others two.
In cases (6), (8), (9), and (10), these branch values are assumed to be $\infty$ and $0$. Both of them have different branch orders.
In case (7), we assume the branch value to be $\infty$.

By our hypothesis, for each $\sigma \in {\rm G}({\mathbb L}/{\mathbb K})$, there is a $\hat{T}_{\sigma} \in {\rm PSL}_{2}({\mathbb L})$ satisfying that 
$R^{\sigma}=\chi(\hat{T}_{\sigma})(R)$.  Also, (i) ${\rm Aut}(R^{\sigma})=\hat{T}_{\sigma}^{-1} {\rm Aut}(R) \hat{T}_{\sigma}$,  
(ii) $\pi^{\sigma}:{\mathbb P}^{1} \to {\mathbb P}^{1}$ is a branched regular cover with deck group ${\rm Aut}(R^{\sigma})$, and 
(iii) there is $S_{\sigma} \in {\rm PSL}_{2}({\mathbb L})$ such that $S_{\sigma} \circ \pi^{\sigma}=\pi \circ \hat{T}_{\sigma}$. 
The transformation $S_{\sigma}$ is uniquely determined by $\sigma$ (it does not depend on the chosen $\hat{T}_{\sigma}$ because any other choice for $\hat{T}_{\sigma}$ is obtained by composition at the left with an element of ${\rm Aut}(R)$). This uniqueness ensures that 
the collection $\{S_{\sigma}^{-1}\}_{\sigma \in {\rm G}({\mathbb L}/{\mathbb K})}$ satisfies Weil's co-cycle conditions \cite{Weil} (the condition for ${\mathbb K}$ to be perfect is needed in here) with respect to the field ${\mathbb K}$, that is, there is a smooth projective curve of genus zero, say $\hat{B}$, defined over ${\mathbb K}$, and an isomorphism $\hat{L}:{\mathbb P}^{1} \to \hat{B}$ so that, for every $\sigma \in {\rm G}({\mathbb L}/{\mathbb K})$, it holds that $\hat{L}=\hat{L}^{\sigma} \circ S_{\sigma}^{-1}$. 

Let us consider the following divisor $D \in {\rm Div}(\hat{B})$:
\begin{enumerate}[leftmargin=15pt]

\item $D=[\hat{L}(0)]+[\hat{L}(1)]+[\hat{L}(\infty)]$, if ${\rm Aut}_{\chi}(R)$ is isomorphic to ${\mathbb Z}_{2}^{2}$;

\item $D=[\hat{L}(\infty)]$, if we are in the rest of the cases.

\end{enumerate}

As each $S_{\sigma}$ must preserve the branch divisor of $\pi$ (which is the same as for $\pi^{\sigma}$) and their branch orders, it may be observed that $D$ is ${\mathbb K}$-rational. So, 
in the case (2),  the point $\hat{L}(\infty)$ is a ${\mathbb K}$-rational point in $\hat{B}$. 
In the case (1), 
by adding to $D$ a canonical ${\mathbb K}$-divisor of $\hat{B}$ (which has degree $-2$), we may find an effective  ${\mathbb K}$-divisor of degree $1$.  As $\hat{B}$ is defined over ${\mathbb K}$ and it has genus zero, the Riemann-Roch ensures the existence of a  ${\mathbb K}$-rational point in $\hat{B}$.

As $\hat{B}$ has genus zero, the existence of one ${\mathbb K}$-rational point asserts that $\hat{B}$ has infinitely many ${\mathbb K}$-rational points. 
In particular, we may take one of these points, say $b$, which is not a branch value of $\hat{L} \circ \pi:{\mathbb P}^{1} \to \hat{B}$. 
Take a point $p \in {\mathbb P}^{1}$ so that $\hat{L}(\pi(p))=b$. We may see that $\sigma(p)$ and $\hat{T}_{\sigma}^{-1}(p)$ are projected to the same point under $\pi^{\sigma}$. It follows the existence of a unique $h_{\sigma} \in {\rm Aut}(R^{\sigma})$ with $h_{\sigma}(\hat{T}_{\sigma}^{-1}(p))=\sigma(p)$. 
If we consider $T_{\sigma}=\hat{T}_{\sigma} \circ h_{\sigma}^{-1}$, then we obtain a $\chi$-coclycle for $R$ with respect to the extension ${\mathbb L}/{\mathbb K}$.

(2) Now, assume ${\rm Aut}_{\chi}(R)$ is a non-trivial cyclic group, say generated by an order $m \geq 2$ element $A$. Let $p, q \in {\mathbb P}^{1}$ be the two fixed points of $A$. By changing $R$ by a suitable $\chi$-equivalent one, we may assume $A(z)=e^{2 \pi i/m}$; so $p=\infty$ and $q=0$.
We may proceed as before, but in this case, we consider the branch divisor of $\hat{L} \circ \pi:{\mathbb P}^{1} \to B$, i.e., $D=[\hat{p}]+[\hat{q}]$, where $\hat{p}=\hat{L}(\pi(p))$ and $\hat{q}=\hat{L}(\pi(q))$. This divisor is of dregree two and ${\mathbb K}$-rational. Now, each of the points $\hat{p}$ and $\hat{q}$ 
is either ${\mathbb K}$-rational or ${\mathbb K}_{1}$-rational for a suitable quadratic extension of ${\mathbb K}$. Now, we follow as above using ${\mathbb K}_{1}$ instead of ${\mathbb K}$.
\end{proof}

\begin{rema}\label{observabien}
In the above proof, part (2), we have obtained an at most quadratic extension ${\mathbb K}_{1}$ of ${\mathbb K}$. The $\chi$-cocycle 
$\{T_{\sigma}\}_{\sigma \in {\rm G}({\mathbb L}/{\mathbb K}_{1})}$ produces a Weil's datum $\{f_{\sigma}=T_{\sigma}^{-1}\}_{\sigma \in {\rm G}({\mathbb L}/{\mathbb K}_{1})}$. By Weil's descent theorem, we obtain a genus zero smooth algebraic curve $B$, defined over ${\mathbb K}_{1}$, and an isomorphism $L:{\mathbb P}^{1} \to B$, such that $L=L^{\sigma} \circ f_{\sigma}$. The point $L(p)$ is a ${\mathbb K}_{1}$-rational point. In fact, for each $\sigma \in {\rm G}({\mathbb L}/{\mathbb K}_{1})$, 
$L(p)^{\sigma}=L^{\sigma}(p^{\sigma})=L \circ T_{\sigma} (\sigma(p))=L(p).$
\end{rema}

\begin{coro}\label{coroaux}
Let $\chi$ be a (right) action on ${\mathbb L}(z)$ satisfying the ${\rm G}({\mathbb L})$-property.
Let $R \in {\mathbb L}(z)$ be such that ${\rm Aut}_{\chi}(R)$ is finite and ${\mathcal M}_{R}^{\chi}$ is a perfect subfield of ${\mathbb L}$.
\begin{enumerate}
\item If ${\rm Aut}_{\chi}(R)$ is not a non-trivial cyclic group, then there is a $\chi$-cocycle for $R$ with respect to the extension ${\mathbb L}/{\mathcal M}_{R}^{\chi}$.
\item If ${\rm Aut}_{\chi}(R)$ is a non-trivial cyclic group, then there is an (at most quadratic) extension ${\mathbb K}_{1}$ of ${\mathcal M}_{R}^{\chi}$ and there is a 
$\chi$-cocycle for $R$ with respect to the extension ${\mathbb L}/{\mathbb K}_{1}$.
\end{enumerate}
\end{coro}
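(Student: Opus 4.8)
The plan is to obtain Corollary \ref{coroaux} as an immediate application of Theorem \ref{teocociclo} with the choice ${\mathbb K}={\mathcal M}_{R}^{\chi}$. The two hypotheses that are explicit in that theorem are granted by the corollary: ${\mathcal M}_{R}^{\chi}$ is assumed perfect, and ${\rm Aut}_{\chi}(R)$ is assumed finite. Moreover the case division into (1) and (2) in the corollary is literally the case division (1) and (2) of Theorem \ref{teocociclo}, and in case (2) that theorem already manufactures an at most quadratic extension ${\mathbb K}_{1}$ of the chosen base field. Hence the entire task reduces to checking the one remaining standing assumption of Theorem \ref{teocociclo}, namely that for every $\sigma \in {\rm G}({\mathbb L}/{\mathcal M}_{R}^{\chi})$ there is some $\hat{T}_{\sigma}\in {\rm PSL}_{2}({\mathbb L})$ with $R^{\sigma}=\chi(\hat{T}_{\sigma})(R)$. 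By the very definition of $U_{R}^{\chi}$, this assumption is equivalent to the inclusion ${\rm G}({\mathbb L}/{\mathcal M}_{R}^{\chi}) \subseteq U_{R}^{\chi}$.

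To prove this inclusion I would show that $U_{R}^{\chi}$ is a closed (in fact open) subgroup of ${\rm G}({\mathbb L})$ for the Krull topology, and then invoke the Galois correspondence together with the defining equality ${\mathcal M}_{R}^{\chi}={\rm Fix}(U_{R}^{\chi})$. First I would write $R$ in normalized form and let ${\mathbb K}_{0}$ be the subfield of ${\mathbb L}$ generated over the prime field ${\mathcal Q}$ by the finitely many coefficients of $R$. Since ${\mathbb K}_{0}$ is finitely generated, the stabilizer $N:=\{\sigma \in {\rm G}({\mathbb L}): R^{\sigma}=R\}={\rm G}({\mathbb L}/{\mathbb K}_{0})$ is a basic open subgroup. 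Using the ${\rm G}({\mathbb L})$-property exactly as in Lemma \ref{lema21}, one checks that $U_{R}^{\chi}$ is stable under left and right multiplication by $N$: if $\sigma \in U_{R}^{\chi}$ and $\nu \in N$, then $R^{\sigma\nu}=R^{\sigma}\equiv_{\chi}R$, while $R^{\nu\sigma}=(R^{\sigma})^{\nu}\equiv_{\chi}R^{\nu}=R$. Thus $U_{R}^{\chi}=\bigcup_{g \in U_{R}^{\chi}} gN$ is a union of cosets of the open subgroup $N$, so it is open, and an open subgroup of a topological group is automatically closed.

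Once $U_{R}^{\chi}$ is known to be closed, the Galois correspondence for the extension ${\mathbb L}/{\mathcal M}_{R}^{\chi}$ gives ${\rm G}({\mathbb L}/{\rm Fix}(U_{R}^{\chi}))=U_{R}^{\chi}$; since ${\rm Fix}(U_{R}^{\chi})={\mathcal M}_{R}^{\chi}$ by definition, this is exactly ${\rm G}({\mathbb L}/{\mathcal M}_{R}^{\chi})=U_{R}^{\chi}$, and in particular the inclusion needed above. With the standing assumption verified, Theorem \ref{teocociclo} applies with ${\mathbb K}={\mathcal M}_{R}^{\chi}$ and delivers both conclusions of the corollary directly.

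I expect the main obstacle to lie in the Galois-correspondence step rather than in the closedness argument, which is purely formal. The subtlety is that ${\mathbb L}/{\mathcal M}_{R}^{\chi}$ need not be algebraic, so one cannot simply quote finite or profinite Galois theory; one needs the general Artin-type statement that a closed subgroup of ${\rm Aut}({\mathbb L})$ coincides with the automorphism group of its fixed field. Here the perfectness of ${\mathcal M}_{R}^{\chi}$, together with the reduction (via Lemma \ref{finitedegreeextension} and the remark following it) to the case where ${\mathbb L}$ is finitely generated over ${\mathcal M}_{R}^{\chi}$, is what makes this input manageable, and it is worth isolating that density/closedness statement as the one nontrivial ingredient feeding into Theorem \ref{teocociclo}.
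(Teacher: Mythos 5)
Your reduction is the same one the paper intends: Corollary \ref{coroaux} is stated there with no proof at all, as a direct application of Theorem \ref{teocociclo} with ${\mathbb K}={\mathcal M}_{R}^{\chi}$, and you are right that the only thing actually requiring verification is the standing hypothesis of that theorem, i.e.\ the inclusion ${\rm G}({\mathbb L}/{\mathcal M}_{R}^{\chi}) \subseteq U_{R}^{\chi}$. You deserve credit for isolating this: the paper silently assumes it. Your argument that $U_{R}^{\chi}$ is open (a union of cosets of the stabilizer $N={\rm G}({\mathbb L}/{\mathbb K}_{0})$ of the coefficient field), hence closed, is also fine.

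The gap is the final step. For a transcendental extension ${\mathbb L}/{\mathcal M}_{R}^{\chi}$ there is no ``Artin-type'' statement saying that a closed (or even open) subgroup $H$ of ${\rm Aut}({\mathbb L})$ satisfies ${\rm G}({\mathbb L}/{\rm Fix}(H))=H$; that principle is simply false. Take ${\mathbb L}={\mathbb C}$, let $N={\rm Aut}({\mathbb C}/{\mathbb Q}(\pi))$, and let $\tau$ be any extension to ${\mathbb C}$ of the isomorphism ${\mathbb Q}(\pi)\to{\mathbb Q}(\pi)$ sending $\pi$ to $\pi+1$. Since $\tau$ normalizes $N$, the subgroup $H=\langle \tau\rangle N$ is a union of cosets of the open subgroup $N$, hence open and closed. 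Its fixed field is $\{f(\pi): f(X+1)=f(X)\}={\mathbb Q}$, so ${\rm G}({\mathbb C}/{\rm Fix}(H))={\rm Aut}({\mathbb C})\supsetneq H$ (an automorphism sending $\pi$ to $\pi^{2}$ is not in $H$). So openness of $U_{R}^{\chi}$ buys you nothing toward ${\rm G}({\mathbb L}/{\rm Fix}(U_{R}^{\chi}))\subseteq U_{R}^{\chi}$; the one direction you can get for free is the trivial one, $U_{R}^{\chi}\subseteq {\rm G}({\mathbb L}/{\rm Fix}(U_{R}^{\chi}))$. A genuine proof of the needed inclusion must use something specific to the equivalence relation $\equiv_{\chi}$, not just topology: for $\chi_{\infty}$ Silverman gets it from the existence of the coarse moduli scheme ${\rm M}_{d}={\rm Rat}_{d}/{\rm PSL}_{2}$ (so that $\sigma\mapsto [R^{\sigma}]$ factors through an algebraic point whose residue field is ${\mathcal M}_{R}$), and for a general action one would need an analogous argument, e.g.\ reducing to the algebraic closure of the finitely generated field ${\mathbb K}_{0}$, where the honest profinite Galois correspondence applies, and then separately controlling automorphisms that move transcendentals. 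As written, your proof does not close this step, and since the paper omits it entirely, it remains the real content of the corollary.
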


\section{Universal right actions}\label{Sec:universal}
Let us consider a (right) action $\chi:{\rm PSL}_{2}({\mathbb L}) \times {\mathbb L}(z) \to {\mathbb L}(z)$, which datisfies the ${\rm G}({\mathbb L})$-property.

We may think of $\chi$ as a cofunctor that associate to every $T \in {\rm PSL}_{2}({\mathbb L})$ a bijection 
{\small
$$\chi(T):{\mathbb L}(z) \to {\mathbb L}(z): R \mapsto \chi(T)(R).$$
}

We will say that  the action $\chi$  is a {\it universal right action} if it can be extended, as a cofunctor, as follows:
\begin{enumerate}
\item if $X$ and $Y$ are smooth algebraic curves of genus zero, both defined over ${\mathbb L}$, and $\Psi:X \to Y$ is an 
isomorphism, also defined over ${\mathbb L}$, then $\chi(\Psi):{\mathbb L}(Y) \to {\mathbb L}(X)$ is a bijective map, where ${\mathbb L}(X)$ and ${\mathbb L}(Y)$ denote the spaces of ${\mathbb L}$-rational maps on $X$ and $Y$, respectively;

\item for every $\sigma \in {\rm G}({\mathbb L}/{\mathbb K})$ and $R \in {\mathbb L}(Y)$, it holds that $\chi(\Psi)(R)^{\sigma}=\chi(\Psi^{\sigma})(R^{\sigma})$;

\item $\chi(I)=I$;

\item if $X,Y,Z$ are smooth algebraic curves of genus zero, each defined over ${\mathbb L}$, and 
$\Psi_{2}:X \to Y$ and $\Psi_{1}:Y \to Z$ are isomorphisms, both defined over ${\mathbb L}$, then 
$\chi(\Psi_{1} \circ \Psi_{2})=\chi(\Psi_{2}) \circ \chi(\Psi_{1})$.
\end{enumerate}

Examples of such kinds of actions are $\chi=\chi_{\infty}$ and $\chi_{k}$ (as this comes from the pull-back of rational  $k$-forms) which are discussed in the next sections.

\subsection{Sometimes the existence of a $\chi$-cocycle provides the existence of a $\chi$-coboundary}
As previously seen, the subfield ${\mathbb K}$ of ${\mathbb L}$ is a $\chi$-field of definition if we know the existence of a $\chi$-coboundary. Unfortunately, in general, this is a difficult task. On the other hand, to search for just a $\chi$-coclycle is a much easier problem. Sometimes, the existence of a $\chi$-coclycle asserts the existence of a $\chi$-coboundary up to a quadratic extension of ${\mathbb K}$. 

\begin{theo}\label{teomain}
Let ${\mathbb K}$ be a perfect subfield of ${\mathbb L}$, and  $\chi$ be a universal right action.
Assume there is a $\chi$-cocycle $\{T_{\sigma}\}_{\sigma \in {\rm G}({\mathbb L}/{\mathbb K})}$, for $R \in {\mathbb L}(z)$ with respect to the extension ${\mathbb L}/{\mathbb K}$.
Then there exists a subfield ${\mathbb K}_{1}$ of ${\mathbb L}$, either equal to ${\mathbb K}$ or a quadratic extension of it, such that ${\mathbb K}_{1}$ is a $\chi$-field of definition of $R$.
\end{theo}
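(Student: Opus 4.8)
The plan is to feed the given $\chi$-cocycle into Weil's descent theorem to produce a genus-zero model $B$ of ${\mathbb P}^{1}$ over ${\mathbb K}$, to transport $R$ to a self-map of $B$ defined over ${\mathbb K}$ using the universality of $\chi$, and finally to pay for the possible absence of a ${\mathbb K}$-rational point on $B$ by passing to an at most quadratic extension ${\mathbb K}_{1}$ via Lemma \ref{lemaaux}.

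First I would check that $\{f_{\sigma}:=T_{\sigma}^{-1}\}_{\sigma \in {\rm G}({\mathbb L}/{\mathbb K})}$ is a Weil datum for ${\mathbb P}^{1}$ and ${\mathbb L}/{\mathbb K}$: inverting the cocycle identity $T_{\sigma\tau}=T_{\sigma}\circ T_{\tau}^{\sigma}$ yields precisely $f_{\tau\sigma}=f_{\sigma}^{\tau}\circ f_{\tau}$. Applying Weil's descent theorem (exactly as in Remark \ref{observabien}) then gives a smooth genus-zero curve $B$, defined over ${\mathbb K}$, together with an isomorphism $L:{\mathbb P}^{1}\to B$, defined over ${\mathbb L}$, satisfying $L=L^{\sigma}\circ T_{\sigma}^{-1}$, equivalently $L^{\sigma}=L\circ T_{\sigma}$, for every $\sigma$.

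The heart of the argument is to show that $S:=\chi(L^{-1})(R)\in {\mathbb L}(B)$ is defined over ${\mathbb K}$. Using axiom (2) of a universal action, then the relations $L^{\sigma}=L\circ T_{\sigma}$ and $R^{\sigma}=\chi(T_{\sigma})(R)$, and finally the contravariant composition axiom (4) together with the right-action law $\chi(T_{\sigma}^{-1})(\chi(T_{\sigma})(R))=\chi(I)(R)=R$, one computes, for each $\sigma \in {\rm G}({\mathbb L}/{\mathbb K})$,
$$S^{\sigma}=\chi\big((L^{\sigma})^{-1}\big)(R^{\sigma})=\chi(T_{\sigma}^{-1}\circ L^{-1})\big(\chi(T_{\sigma})(R)\big)=\chi(L^{-1})(R)=S.$$
Since ${\mathbb K}$ is perfect, $\sigma$-invariance then forces $S$ to be defined over ${\mathbb K}$ by Galois descent (as in the normalization discussion). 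This computation, which requires careful bookkeeping of the contravariant cofunctorial composition law and of which maps get conjugated by $\sigma$, is where I expect the only real friction.

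Finally, $S$ is a non-constant self-map of $B$ defined over ${\mathbb K}$ (the relevant case being $\deg R \geq 1$), so Lemma \ref{lemaaux} furnishes a ${\mathbb K}_{1}$-rational point of $B$, where ${\mathbb K}_{1}$ equals ${\mathbb K}$ or is a quadratic extension of it. Being of genus zero and carrying a rational point, $B$ is ${\mathbb K}_{1}$-isomorphic to ${\mathbb P}^{1}$ through some $\phi:{\mathbb P}^{1}\to B$ defined over ${\mathbb K}_{1}$. Setting $\tilde{S}:=\chi(\phi)(S)$, axiom (2) together with $\phi^{\sigma}=\phi$ and $S^{\sigma}=S$ for $\sigma \in {\rm G}({\mathbb L}/{\mathbb K}_{1})$ (note that ${\mathbb K}_{1}$, being finite over the perfect field ${\mathbb K}$, is again perfect) shows $\tilde{S}\in {\mathbb K}_{1}(z)$, while axiom (4) gives $\tilde{S}=\chi(L^{-1}\circ\phi)(R)$ with $L^{-1}\circ\phi \in {\rm PSL}_{2}({\mathbb L})$, so that $\tilde{S}\equiv_{\chi}R$. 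Hence ${\mathbb K}_{1}$ is a $\chi$-field of definition of $R$, as desired.
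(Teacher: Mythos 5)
Your proposal is correct and follows essentially the same route as the paper's proof: Weil descent applied to $\{T_{\sigma}^{-1}\}$ to get $B/{\mathbb K}$ and $L$, the identical cofunctorial computation showing $S=\chi(L^{-1})(R)$ is ${\mathbb K}$-rational, Lemma \ref{lemaaux} to produce the at most quadratic extension ${\mathbb K}_{1}$ with a rational point, and transport back to ${\mathbb P}^{1}$ via a ${\mathbb K}_{1}$-isomorphism. The only differences are cosmetic (you use $\phi=\Phi^{-1}$ and explicitly note that ${\mathbb K}_{1}$ remains perfect and that $S$ must be non-constant to invoke the lemma, points the paper leaves implicit).
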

\begin{proof}
The collection $\{f_{\sigma}=T_{\sigma}^{-1}\}_{\sigma \in {\rm G}({\mathbb L}/{\mathbb K})}$ provides a Weil's datum. So, by Weil's descent theorem, there exists a genus zero smooth algebraic curve $B$, defined over ${\mathbb K}$, and there exists an isomorphism $L:{\mathbb P}^{1} \to B$, defined over ${\mathbb L}$, such that, for every $\sigma \in {\rm G}({\mathbb L}/{\mathbb K})$, it holds that $L=L^{\sigma} \circ T_{\sigma}^{-1}$.

If $S \in {\mathbb L}(B)$ is such that $R=\chi(L)(S)$, then, for every $\sigma \in {\rm G}({\mathbb L}/{\mathbb K})$, 
{\small
$$S=\chi(L^{-1})(R)=\chi(T_{\sigma} \circ (L^{\sigma})^{-1})(R)=\chi((L^{\sigma})^{-1}) (\chi(T_{\sigma})(R))=$$
$$=\chi((L^{\sigma})^{-1})(R^{\sigma})=\chi((L^{-1})^{\sigma})(R^{\sigma})=\chi(L^{-1})(R)^{\sigma}=S^{\sigma},
$$
}
that is, $S \in {\mathbb K}(B)$.

Now, let $b \in B$ be a ${\mathbb K}_{1}$-rational, where ${\mathbb K}_{1}$ is at most a quadratic extension of ${\mathbb K}$ (the existence is guarantee by 
Lemma \ref{lemaaux}). In this case, as a consequence of the Riemann-Roch theorem, there is an isomorphism $\Phi:B \to {\mathbb P}^{1}$, defined over ${\mathbb K}_{1}$. If we consider $\hat{R} \in {\mathbb L}(z)$ such that $\chi(\hat{R})=S$, then one may observe that $\hat{R} \in {\mathbb K}_{1}(z)$. 
As $T=\Phi \circ L \in {\rm PSL}_{2}({\mathbb L})$ and
{\small
$$\chi(T)(\hat{R})=\chi(\Phi \circ L)(\hat{R})=\chi(L)(\chi(\Phi)(\hat{R}))=\chi(L)(S)=R,$$
}
we obtain that ${\mathbb K}_{1}$ is a $\chi$-field of definition of $R$.
\end{proof}

The following result provides an affirmative aswer for the second question in the introduction.

\begin{theo}\label{coromain}
Let $\chi$ be a universal right action and $R \in {\mathbb L}(z)$, with finite ${\rm Aut}_{\chi}(R)$.
If ${\mathcal M}_{R}^{\chi}$ is a perfect field, then ${\rm FOD/FOM}_{\chi}(R) \leq 2$. In particular, this holds if 
${\mathbb L}$ either has characteristic zero or it is the algebraic closure of a finite field.
\end{theo}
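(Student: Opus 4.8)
The plan is to combine the cocycle-existence results of Corollary \ref{coroaux} with the descent mechanism of Theorem \ref{teomain}, splitting into cases according to the structure of ${\rm Aut}_{\chi}(R)$. First I would observe that the hypotheses of Corollary \ref{coroaux} are met: $\chi$ is universal (hence satisfies the ${\rm G}({\mathbb L})$-property), ${\rm Aut}_{\chi}(R)$ is finite, and ${\mathcal M}_{R}^{\chi}$ is perfect. Moreover, for every $\sigma \in {\rm G}({\mathbb L}/{\mathcal M}_{R}^{\chi})=U_{R}^{\chi}$ we have $R^{\sigma} \equiv_{\chi} R$ by the very definition of the field of moduli as ${\rm Fix}(U_{R}^{\chi})$, so the standing assumption ``there is a $\hat{T}_{\sigma}$ with $R^{\sigma}=\chi(\hat{T}_{\sigma})(R)$'' needed to invoke Corollary \ref{coroaux} holds automatically.

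Next I would run the two cases. If ${\rm Aut}_{\chi}(R)$ is not a non-trivial cyclic group, then Corollary \ref{coroaux}(1) furnishes a $\chi$-cocycle for $R$ with respect to ${\mathbb L}/{\mathcal M}_{R}^{\chi}$. Feeding this cocycle into Theorem \ref{teomain} (applied with ${\mathbb K}={\mathcal M}_{R}^{\chi}$) produces a $\chi$-field of definition ${\mathbb K}_{1}$ that is either ${\mathcal M}_{R}^{\chi}$ itself or a quadratic extension of it; in either case $[{\mathbb K}_{1}:{\mathcal M}_{R}^{\chi}] \leq 2$, so ${\rm FOD/FOM}_{\chi}(R) \leq 2$. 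If instead ${\rm Aut}_{\chi}(R)$ is a non-trivial cyclic group, Corollary \ref{coroaux}(2) gives a $\chi$-cocycle over ${\mathbb L}/{\mathbb K}_{1}'$ for some at most quadratic extension ${\mathbb K}_{1}'$ of ${\mathcal M}_{R}^{\chi}$. Here the main subtlety arises, and it is where I would focus attention: Theorem \ref{teomain} requires the base field to be perfect, and applying it to ${\mathbb K}_{1}'$ could in principle yield a further quadratic extension, giving only a degree $4$ bound rather than degree $2$.

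The hard part, then, is to show that the cyclic case does not cost two successive quadratic steps. I would resolve this by using Remark \ref{observabien}, which already records that in the cyclic case the Weil descent produces a genus zero curve $B$ over ${\mathbb K}_{1}'$ together with a distinguished ${\mathbb K}_{1}'$-rational point $L(p)$. Since $B$ is then genus zero with a rational point over ${\mathbb K}_{1}'$, the Riemann--Roch theorem gives an isomorphism $\Phi:B \to {\mathbb P}^{1}$ defined over ${\mathbb K}_{1}'$, so in the final descent step of Theorem \ref{teomain} no additional quadratic extension via Lemma \ref{lemaaux} is needed: ${\mathbb K}_{1}'$ itself is already a $\chi$-field of definition. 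Because ${\mathbb K}_{1}'$ is at most quadratic over ${\mathcal M}_{R}^{\chi}$, we again conclude ${\rm FOD/FOM}_{\chi}(R) \leq 2$. Thus in both cases the bound holds.

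Finally I would justify the ``in particular'' clause by invoking the paper's own Notations: if ${\mathbb L}$ has characteristic zero then ${\mathcal M}_{R}^{\chi}$, being a subfield of a characteristic zero field, is automatically perfect; and if ${\mathbb L}$ is the algebraic closure of a finite field, then every subfield is perfect as well, since finite fields and their algebraic extensions are perfect. Hence the perfectness hypothesis on ${\mathcal M}_{R}^{\chi}$ is met in precisely the two stated interest cases, completing the argument.
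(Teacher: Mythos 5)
Your proposal is correct and follows essentially the same route as the paper: the non-cyclic case via Corollary \ref{coroaux}(1) fed into Theorem \ref{teomain} with ${\mathbb K}={\mathcal M}_{R}^{\chi}$, and the cyclic case handled exactly as in the paper by invoking Remark \ref{observabien} to supply a ${\mathbb K}_{1}$-rational point on $B$ so that the final descent step costs no further extension. You correctly identified and closed the only genuine subtlety (avoiding a degree-$4$ bound in the cyclic case), which is precisely what the paper's proof does.
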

\begin{proof}
Let us consider ${\mathbb K}={\mathcal M}_{R}^{\chi}$ in Theorem \ref{teomain}.
If ${\rm Aut}_{\chi}(R)$ is not a non-trivial cyclic group, the result follows from Theorem \ref{teomain} and Corollary \ref{coroaux}.

Let us now assume ${\rm Aut}_{\chi}(R)$ is a non-trivial cyclic group. In this case, the generator of ${\rm Aut}_{\chi}(R)$ has two fixed points. Let $z \in {\mathbb P}^{1}$ be one of them (up to conjugation, we may assume $z=\infty$). In this case, we consider the quadratic extension ${\mathbb K}_{1}$ of ${\mathcal M}_{R}^{\chi}$ obtained in the proof of part (2) of Corollary \ref{coroaux}. 
If $L:{\mathbb P}^{1} \to B$ is as in the proof of Theorem \ref{teomain}, then $b=L(z)$ is a ${\mathbb K}_{1}$-rational point (Remark \ref{observabien}). Now, the result again follows from the final part of the proof of Theorem \ref{teomain}.
\end{proof}

\section{Example 1: The $\chi_{\infty}$-action}
Let us consider the conjugation action $\chi_{\infty}$, which is an example of a universal (right) action and whch satisfies the ${\rm G}({\mathbb L})$-property. Theorem \ref{TeoWeil0}, asserts that a subfield ${\mathbb K}$ of ${\mathbb L}$ is a $\chi_{\infty}$-field of definition of $R$ if and only if there is some $T \in {\rm PSL}_{2}({\mathbb L})$ such that
$R^{\sigma}=\left(T^{\sigma}\right)^{-1} \circ T \circ R \circ T^{-1} \circ T^{\sigma}.$
In this case, 
{\small
$$U_{R}^{\chi_{\infty}}=\left\{ \sigma \in {\rm G}({\mathbb L}): R^{\sigma}=T_{\sigma}^{-1} \circ  R \circ T_{\sigma}, \; {\rm for \; a \; suitable } \; T_{\sigma} \in {\rm PSL}_{2}({\mathbb L}) \right\},$$
}
and
{\small
$${\rm Aut}_{\chi_{\infty}}(R)=\{T \in {\rm PSL}_{2}({\mathbb L}): T^{-1} \circ R \circ T=R\}.$$
}

It is not difficult to check that a rational map of degree at most one (so, with a non-finite group of $\chi_{\infty}$-automorphisms) can be defined over at most a quadratic extension of its $\chi_{\infty}$-field of moduli. If the rational map $R$ has degree bigger or equal to two, then its group of $\chi_{\infty}$-automorphisms is finite. 

The above, together with Theorem \ref{coromain}, gives us the following generalization (to positive characteristic) of the results in \cite{Hidalgo}.

\begin{coro}
Let $R \in {\mathbb L}(z)$.
 If ${\mathcal M}_{R}^{\chi_{\infty}}$ is perfect, then ${\rm FOD/FOM}_{\chi_{\infty}}(R) \leq 2$. In particular, this holds if 
 either ${\mathbb L}$ has characteristic zero or it is the algebraic closure of a finite field.
\end{coro}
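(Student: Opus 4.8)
The plan is to reduce everything to Theorem \ref{coromain}, handling separately the degenerate low-degree maps for which the hypothesis of that theorem fails.

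First I would record, as observed at the start of this section, that $\chi_{\infty}$ is a universal right action satisfying the ${\rm G}({\mathbb L})$-property; hence Theorem \ref{coromain} is available as soon as ${\rm Aut}_{\chi_{\infty}}(R)$ is finite. If $R$ has degree at least two, then ${\rm Aut}_{\chi_{\infty}}(R)$ is finite, so under the hypothesis that ${\mathcal M}_{R}^{\chi_{\infty}}$ is perfect, Theorem \ref{coromain} yields ${\rm FOD/FOM}_{\chi_{\infty}}(R) \leq 2$ immediately. For the ``In particular'' clause I would note that every field of characteristic zero is perfect, and every subfield of $\overline{{\mathbb F}_{q}}$ is algebraic over the perfect field ${\mathbb F}_{q}$ and hence perfect, so in both listed cases ${\mathcal M}_{R}^{\chi_{\infty}}$ is automatically perfect.

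The remaining case is $\deg R \leq 1$, where ${\rm Aut}_{\chi_{\infty}}(R)$ is infinite and Theorem \ref{coromain} does not apply; here I would argue directly, using that $\chi_{\infty}$-equivalence is exactly ${\rm PSL}_{2}({\mathbb L})$-conjugacy and putting $R$ into a normal form. A constant map is $\chi_{\infty}$-equivalent to $z \mapsto 0$, which is defined over the prime field ${\mathcal Q}$, giving ${\rm FOD/FOM}_{\chi_{\infty}}(R)=1$. A degree-one map is a M\"obius transformation. If it has a single fixed point (the parabolic/unipotent case), one conjugates the fixed point to $\infty$ to get $z \mapsto z+b$ with $b \neq 0$, and then conjugates by $z \mapsto b z$ to reach $z \mapsto z+1$, again defined over ${\mathcal Q}$. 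If it has two distinct fixed points, one conjugates them to $0,\infty$ to get $z \mapsto \lambda z$ with $\lambda \neq 0,1$; a direct computation shows that $z \mapsto \lambda z$ and $z \mapsto \mu z$ are conjugate precisely when $\mu \in \{\lambda,\lambda^{-1}\}$, so the conjugacy invariant is $s=\lambda+\lambda^{-1}$. Consequently $U_{R}^{\chi_{\infty}}$ is the stabilizer of $s$, whence $s \in {\mathcal M}_{R}^{\chi_{\infty}}$, while $\lambda$ is a root of $X^{2}-sX+1$ and therefore lies in an at most quadratic extension of ${\mathcal M}_{R}^{\chi_{\infty}}$; since $z\mapsto \lambda z$ is defined over ${\mathcal M}_{R}^{\chi_{\infty}}(\lambda)$, this yields ${\rm FOD/FOM}_{\chi_{\infty}}(R)\leq 2$.

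The substantive content is carried entirely by Theorem \ref{coromain}, so the only genuine work lies in the degree-one analysis, and I expect the main (though minor) obstacle to be verifying the normal-form and conjugacy-invariant claims \emph{uniformly across all characteristics} — in particular checking in characteristic two that the parabolic model $z\mapsto z+1$ and the diagonalizable model $z\mapsto\lambda z$ are still attainable and that $s=\lambda+\lambda^{-1}$ really cuts out the $\chi_{\infty}$-field of moduli. As the conjugating transformations used above are nondegenerate for every characteristic, I anticipate this obstacle to be purely routine rather than serious.
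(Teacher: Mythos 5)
Your proposal is correct and follows essentially the same route as the paper: the case $\deg R \geq 2$ is exactly the paper's appeal to Theorem \ref{coromain} via finiteness of ${\rm Aut}_{\chi_{\infty}}(R)$, and the case $\deg R \leq 1$ is the step the paper dismisses as ``not difficult to check,'' for which you supply the actual normal-form argument (constants and parabolics over ${\mathcal Q}$, and the diagonalizable case via the invariant $s=\lambda+\lambda^{-1}$ with $\lambda$ quadratic over ${\mathcal M}_{R}^{\chi_{\infty}}$). Your write-up is, if anything, more complete than the paper's on the low-degree case, and the characteristic-two worries you flag are indeed routine.
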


\begin{example}\label{ejemplo0}
Let us consider ${\mathbb L}={\mathbb C}$.
For each $\lambda \in {\mathbb C} \setminus \{0\}$, set $R_{\lambda}(z)=z^{2}+\lambda$. As $R_{\lambda}$ has exactly two critical points, $0$ and $\infty$, and $\infty$ is (super)-attracting fixed point, it can be checked that $R_{\lambda} \equiv_{\chi_{\infty}} R_{\mu}$ if and only if $\lambda=\mu$. This permits us to see that 
$U_{R_{\lambda}}^{\chi_{\infty}}=\{\sigma \in {\rm G}({\mathbb C}): \sigma(\lambda)=\lambda\}$, in particular, that 
${\mathcal M}_{R_{\lambda}}^{\chi_{\infty}}={\mathbb Q}(\lambda)$ is a field of definition of $R_{\lambda}$.
\end{example}

\section{Example 2: The $\chi_{k}$-actions}\label{Sec1}
In this section, we consider the universal right actions $\{\chi_{k}\}_{k \in {\mathbb Z}}$, each one satisfying the ${\rm G}({\mathbb L})$-property.

Let $R \in {\mathbb L}(z)$ and $k \in {\mathbb Z}$. Let us consider the rational $k$-form $\omega_{R} =R(z) dz^{k}$. If $T \in {\rm PSL}_{2}({\mathbb L})$, then we may consider the 
pull-back of $\omega_{R}$ under $T$:
{\small
$$\omega_{R}=R(z) dz^{k} \mapsto  T^{*} \omega_{R}=R(T(z))\left(T'(z)\right)^{k} dz^{k}.$$
}

The above induces the (right) action 
{\small
$\chi_{k}(T)(R):=(R \circ T) \left(T'\right)^{k}.$
}
In this case, the $\chi_{k}$-moduli space of $R$ is ${\mathcal M}_{R}^{\chi_{k}}={\rm Fix}(U_{R}^{\chi_{k}})$, where
{\small
$$U_{R}^{\chi_{k}}=\left\{ \sigma \in {\rm G}({\mathbb L}): R^{\sigma}=(R \circ T_{\sigma})\left(T'_{\sigma}\right)^{k}, \; {\rm for \; a \; suitable } \; T_{\sigma} \in {\rm PSL}_{2}({\mathbb L}) \right\}.$$
}

In particular, 
{\small
$${\rm Aut}_{\chi_{k}}(R)=\{T \in {\rm PSL}_{2}({\mathbb L}): (R \circ T) \left(T'\right)^{k}=R\}.$$
}

\begin{rema}
Note that $T \in {\rm Aut}_{\chi_{k}}(R)$ if and only if $T^{*} \omega_{R} =\omega_{R}$; we also say that $T$ is a {\it holomorphic automorphism} of $\omega_{R}$. We will also say that ${\mathcal M}_{R}^{\chi_{k}}$ is the field of moduli of the meromorphic $k$-form $\omega_{R}$.
\end{rema}

Theorem \ref{TeoWeil0} reads as follows.

\begin{coro}\label{TeoWeil1}
Let $k \in {\mathbb Z}$ be fixed and $R \in {\mathbb L}(z)$. A subfield ${\mathbb K}$ of ${\mathbb L}$ 
 is a $\chi_{k}$-field of definition of $R$ if and only if 
there exists a M\"obius transformation $T \in {\rm PSL}_{2}({\mathbb L})$ (called a Weil's datum for $\omega_{R}$) such that, for every 
$\sigma \in {\rm G}({\mathbb L}/{\mathbb K})$ it holds the following equality
{\small
$$R^{\sigma}(z)=R(T_{\sigma}(z)) \left(T'_{\sigma}(z)\right)^{k}, \quad \mbox{where} \; T_{\sigma}=T^{-1} \circ T^{\sigma}.$$
}
\end{coro}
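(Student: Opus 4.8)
The plan is to apply Theorem~\ref{TeoWeil0} to the universal right action $\chi=\chi_{k}$, which satisfies the ${\rm G}({\mathbb L})$-property as recorded at the start of this section. Once this is in place, the corollary is nothing more than the translation of the abstract coboundary condition into the explicit formula defining $\chi_{k}$, so no new geometric input is required.

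First I would recall, from Theorem~\ref{TeoWeil0}, that ${\mathbb K}$ is a $\chi_{k}$-field of definition of $R$ if and only if there exists a $\chi_{k}$-coboundary for $R$ with respect to ${\mathbb L}/{\mathbb K}$; that is, there is some $T \in {\rm PSL}_{2}({\mathbb L})$ for which the collection $\{T_{\sigma}=T^{-1} \circ T^{\sigma}\}_{\sigma \in {\rm G}({\mathbb L}/{\mathbb K})}$ is a $\chi_{k}$-cocycle. By definition this means both that $R^{\sigma}=\chi_{k}(T_{\sigma})(R)$ for every $\sigma$ and that the cocycle relation $T_{\sigma\tau}=T_{\sigma} \circ T_{\tau}^{\sigma}$ holds.

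Next I would substitute the explicit description $\chi_{k}(T_{\sigma})(R)=(R \circ T_{\sigma})(T_{\sigma}')^{k}$ into the first condition, which produces exactly the displayed identity $R^{\sigma}(z)=R(T_{\sigma}(z))(T'_{\sigma}(z))^{k}$ asserted in the statement, with $T_{\sigma}=T^{-1} \circ T^{\sigma}$.

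Finally I would check that, for a coboundary, the cocycle relation is automatic, so that no extra hypothesis is needed beyond the displayed identity. Using the convention $(T^{\tau})^{\sigma}=T^{\sigma\tau}$ together with $(T^{-1})^{\sigma}=(T^{\sigma})^{-1}$, a short computation gives
$$T_{\sigma} \circ T_{\tau}^{\sigma}=(T^{-1} \circ T^{\sigma}) \circ \left((T^{\sigma})^{-1} \circ T^{\sigma\tau}\right)=T^{-1} \circ T^{\sigma\tau}=T_{\sigma\tau}.$$
Thus the single displayed condition is equivalent to the existence of a $\chi_{k}$-coboundary, and the corollary follows. I expect no genuine obstacle here: all the substantive content lives in Theorem~\ref{TeoWeil0} and in the fact that $\chi_{k}$ satisfies the ${\rm G}({\mathbb L})$-property, so the only task is the bookkeeping of unwinding the definition of $\chi_{k}$ and confirming that the coboundary form automatically fulfills Weil's cocycle condition.
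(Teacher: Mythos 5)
Your proposal is correct and matches the paper's treatment: the paper presents Corollary~\ref{TeoWeil1} as a direct specialization of Theorem~\ref{TeoWeil0} to the action $\chi_{k}$, exactly as you do. Your additional verification that the coboundary $T_{\sigma}=T^{-1}\circ T^{\sigma}$ automatically satisfies Weil's cocycle condition is consistent with the paper's composition convention $(T^{\tau})^{\sigma}=T^{\sigma\tau}$ and is a harmless (indeed welcome) bit of explicit bookkeeping.
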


\begin{rema}
(1)  If $k \in {\mathbb Z}$, $T \in {\rm PSL}_{2}({\mathbb L})$, $R,S \in {\mathbb L}(z)$ and $\lambda \in {\mathbb L}$, it holds that
$\chi_{k}(T,\lambda R+S)=\lambda \chi_{k}(T,R)+\chi_{k}(T,S)$. 
(2) If $k=0$,  then $\chi_{0}(T,R)=R \circ T$. 
The $\chi_{0}$-action, for ${\mathbb L}={\mathbb C}$, restricted to Belyi rational maps (rational maps whose branch values are contained in $\{\infty,0,1\}$) corresponds to an action over genus zero dessin d'enfants. These objects have been an object of study by various authors, and it is related to getting information on the structure of the absolute Galois group ${\rm G}(\overline{\mathbb Q})={\rm Gal}(\overline{\mathbb Q}/{\mathbb Q})$.
\end{rema}

\begin{rema}\label{negaposi}
There is a natural isomorphism between the space of rational $k$-forms and the space of rational $(-k)$-forms defined by the rule:
{\small
$\omega_{R}=R dz^{k} \mapsto \Theta(\omega_{R})=\frac{1}{R}dz^{-k}.$
}
Moreover, if $T \in {\rm PSL}_{2}({\mathbb L})$ and $\sigma \in {\rm G}({\mathbb L})$, then $T^{*}\Theta(\omega_{R})=\Theta(T^{*}\omega_{R})$ and $\Theta(\omega_{R})^{\sigma}=\Theta(\omega_{R}^{\sigma})$. It follows that ${\mathcal M}_{R}^{\chi_{k}}={\mathcal M}_{1/R}^{\chi_{-k}}$ and that a subfield of ${\mathbb L}$ is a field of definition of $\omega_{R}$ if and only if it is a field of definition of $\Theta(\omega_{R})$. In this way, we may restrict to $k \geq 0$.
\end{rema}

\begin{rema}[Rational dynamical systems]
To each $R \in {\mathbb C}(z)$ we may associate 
a {\it rational dynamical system} $D_{R}: \; \dot{z}=R(z)$. If $T \in {\rm PSL}_{2}({\mathbb C})$ and $z$ is a holomorphic solution of the rational dynamical system $D_{R}$, then $u=T^{-1} \circ z$ is a holomorphic solution of the pull-back  $T^{*}D_{R}: \; \dot{u}=R(T(u))(T'(u))^{-1}$. In this way, to work with $D_{R}$ is equivalent to work with the meromorphic $(-1)$-form $R(z) dz^{-1}$. So, ${\mathcal M}_{R}^{\chi_{-1}}={\mathcal M}_{1/R}^{\chi_{1}}$ is the field of moduli of the rational dynamical system $D_{R}$. This relationship may provide algebraic tools (number theoretical tools) to study such rational dynamical systems.
\end{rema}

\subsection*{Question:}
Let $R \in {\mathbb L}(z)$. For each $k \in {\mathbb Z} \cup \{\infty\}$, we have defined the $\chi_{k}$-field of moduli ${\mathcal M}_{R_{\lambda}}^{\chi_{k}}$.
Is there a natural relationship between these different fields of moduli? The following example considers a particular case.

\begin{example} Let us return to Example \ref{ejemplo0}, where ${\mathbb L}={\mathbb C}$. In there, we consider, 
for each $\lambda \in {\mathbb C} \setminus \{0\}$, the quadratic map $R_{\lambda}(z)=z^{2}+\lambda$. If $k \in {\mathbb Z}$, we now consider the rational $k$-form
$\omega_{\lambda}=R_{\lambda}(z) dz^{k}=(z^{2}+\lambda) dz^{k}$.
Now, $R_{\lambda} \equiv_{\chi_{k}} R_{\mu}$ if and only if there is some $T \in {\rm PSL}_{2}({\mathbb C})$ such that $R_{\mu}(z)=R_{\lambda}(T(z))(T'(z))^{k}$.
As $\omega_{\lambda}$ has exactly one pole at $\infty$, it can be checked that $T(z)=az$, for some $a \in {\mathbb C} \setminus \{0\}$. In particular, 
$R_{\lambda} \equiv_{\chi_{k}} R_{\mu}$ if and only if there is some $a \in {\mathbb C}$ such that $a^{2+k}=1$ and $\mu=a^{k} \lambda$. This permits us to see that 
$U_{R_{\lambda}}^{\chi_{\infty}}=\{\sigma \in {\rm G}({\mathbb C}): \sigma(\lambda)=a^{k}\lambda, \; a^{2+k}=1\}$. This asserts that 
(i) ${\mathcal M}_{R_{\lambda}}^{\chi_{0}}={\mathbb Q}(\lambda)={\mathcal M}_{R_{\lambda}}^{\chi_{\infty}}$, (ii) ${\mathcal M}_{R_{\lambda}}^{\chi_{k}}={\mathbb Q}(\lambda^{2+k})$, if $k$ is odd, and
(iii) ${\mathcal M}_{R_{\lambda}}^{\chi_{k}}={\mathbb Q}(\lambda^{1+k/2})$, if $k$ is even.
\end{example}

\subsection{The ${\rm FOD/FOM}_{\chi_{k}}$-parameter}\label{Se:minimal}
In Theorem \ref{coromain}, we have seen that, under the assumption that ${\rm Aut}_{\chi_{k}}(R)$ is finite, and ${\mathcal M}_{R}^{\chi_{k}}$ is perfect (which holds under the assumption that ${\mathbb L}$ is either of characteristic zero or the algebraic closure of a finite field), then $R$ is $\chi_{k}$-definable either over ${\mathcal M}_{R}^{\chi_{k}}$ or a suitable quadratic extension of it. If $R$ has a degree of at least two, then ${\rm Aut}_{\chi_{k}}(R)$ is finite. If $R$ has a degree at most one, then its group of automorphisms might now not be finite.

\begin{lemm}\label{ejemplo1}
Let $R \in {\mathbb L}(z)$ of degree at most $1$ and $k \geq 0$ be an integer.
\begin{enumerate}[leftmargin=15pt]
\item If $k \neq 1$, then ${\mathcal M}_{R}^{\chi_{k}}$ is a field of definition, i.e., ${\rm FOD/FOM}_{\chi_{k}}(R)=1$.
\item If $k=1$, then ${\rm FOD/FOM}_{\chi_{k}}(R) \leq 2$.
\end{enumerate}
\end{lemm}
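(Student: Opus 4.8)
The plan is to argue directly, since for $\deg R \le 1$ the group ${\rm Aut}_{\chi_{k}}(R)$ is typically infinite (the stabilizer of $\omega_{R}$ contains a torus), so Theorem \ref{coromain} does not apply. In each case I will exhibit an explicit rational map $S$ with $S \equiv_{\chi_{k}} R$ whose coefficients lie in ${\mathcal M}_{R}^{\chi_{k}}$ (giving ${\rm FOD/FOM}_{\chi_{k}}(R)=1$) or in an at most quadratic extension of it. The degenerate cases are immediate: if $k=0$ and $\deg R=1$, then $\chi_{0}(R^{-1})(R)=R\circ R^{-1}=z\in{\mathcal Q}(z)$; if $R$ is a constant $c$, then for $k=0$ its $\chi_{0}$-class is $\{c\}$ and $c\in{\mathcal M}_{R}^{\chi_{0}}$, while for $k\ge 1$ the map $z\mapsto az$ sends the scalar of $\omega_{R}=c\,dz^{k}$ to $a^{k}c$, so $R\equiv_{\chi_{k}}1\in{\mathcal Q}(z)$ because ${\mathbb L}$ is algebraically closed. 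It remains to treat $k\ge 1$ with $\deg R=1$.

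For such $R$ the key object is ${\rm div}(\omega_{R})={\rm div}(R)-2k[\infty]$, whose shape depends only on the position of the zero $a$ and the pole $b$ of $R$ relative to $\infty$. There are exactly three possibilities: (a) $a,b,\infty$ pairwise distinct, giving a zero and two poles of pairwise distinct orders $1,-1,-2k$; (b) $a=\infty$, giving two poles of orders $-1,-(2k-1)$ and no zero; (c) $b=\infty$, giving a zero of order $1$ and a single pole of order $-(2k+1)$. I will move the marked points to standard positions by a M\"obius transformation of ${\mathbb L}$, reducing $\omega_{R}$ to one of the normal forms $c'\,\tfrac{z}{z-1}\,dz^{k}$, $c\,\tfrac{1}{z}\,dz^{k}$, or $c\,z\,dz^{k}$, respectively.

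In cases (b) and (c) only two points are marked (and, for $k\ge 2$, they are distinguishable by their orders), so the normalizing transformation is determined up to the torus $z\mapsto az$ fixing $0$ and $\infty$; a direct computation shows this torus rescales the leading scalar by $c\mapsto a^{k-1}c$ in case (b) and by $c\mapsto a^{k+1}c$ in case (c). Whenever the exponent is nonzero I solve $a^{e}=c^{-1}$ in the algebraically closed ${\mathbb L}$ and normalize $c=1$, obtaining $R\equiv_{\chi_{k}} 1/z$ or $R\equiv_{\chi_{k}} z$, both defined over ${\mathcal Q}\subseteq{\mathcal M}_{R}^{\chi_{k}}$. In case (a) the three marked points are pairwise distinguishable, so the normalizing transformation is unique and $c'$ is a complete invariant of the $\chi_{k}$-class; since its construction from ${\rm div}(\omega_{R})$ commutes with ${\rm G}({\mathbb L})$, one checks that $\sigma\in U_{R}^{\chi_{k}}$ iff $\sigma(c')=c'$, whence $c'\in{\mathcal M}_{R}^{\chi_{k}}$ and $S=c'\tfrac{z}{z-1}$ is a model over the field of moduli. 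Since the exponent is $k+1$ in case (c) (never zero) and $k-1$ in case (b) (zero only for $k=1$), this gives ${\rm FOD/FOM}_{\chi_{k}}(R)=1$ in every case as soon as $k\neq 1$, proving (1).

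The one remaining situation, and the crux of (2), is $k=1$ in case (b): after normalization $R\equiv_{\chi_{1}}\lambda/z$, where $\omega_{R}=\tfrac{\lambda}{z}\,dz$ has two simple poles of equal order. Here the exponent $k-1$ vanishes, so $z\mapsto az$ fixes $\lambda$; on the other hand the involution $z\mapsto 1/z$ interchanges the two poles and satisfies $\chi_{1}(1/z)(\lambda/z)=-\lambda/z$. Hence the $\chi_{1}$-class of $\lambda/z$ depends only on $\lambda$ up to sign, so $\sigma\in U_{R}^{\chi_{1}}$ iff $\sigma(\lambda^{2})=\lambda^{2}$, giving $\lambda^{2}\in{\mathcal M}_{R}^{\chi_{1}}$ but, in general, $\lambda\notin{\mathcal M}_{R}^{\chi_{1}}$. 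Adjoining a square root yields ${\mathcal M}_{R}^{\chi_{1}}(\lambda)$, of degree at most two over the field of moduli, over which $\lambda/z$ and therefore $R$ is defined, so ${\rm FOD/FOM}_{\chi_{1}}(R)\le 2$. This residue sign-ambiguity (the same two-fixed-point phenomenon recorded in Theorem \ref{teocociclo}(2)) is the main obstacle: it is precisely what can obstruct descent to the field of moduli when $k=1$, yet it costs only a single square root.
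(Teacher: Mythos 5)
Your proof is correct and follows essentially the same route as the paper's: both normalize the degree-$\le 1$ map by explicit M\"obius transformations to a standard form with marked points among $\{0,1,\infty\}$, read the field of moduli off the residual scalar, and isolate the same obstruction for $k=1$ (a form with two simple poles, whose residue is an invariant only up to sign, forcing at worst the quadratic extension ${\mathcal M}_{R}^{\chi_{1}}(\lambda)/{\mathcal M}_{R}^{\chi_{1}}$). Your organization by the divisor type of $\omega_{R}$ and the scaling exponents $k\pm 1$ is a cleaner bookkeeping of the paper's coefficient cases $a_{0}=0$, $b_{0}=0$, $a_{0}b_{0}\neq 0$, but the substance is identical.
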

\begin{proof}
Let ${\mathcal Q}$ be the basic subfield of ${\mathbb L}$.

Let us first consider the case $k=0$.
If $R(z)=a_{0}$, then $R(T(z))=a_{0}$, for every $T \in {\rm PSL}_{2}({\mathbb L})$. So, for $\sigma \in {\rm G}({\mathbb L}/{\mathcal Q})$ it holds that $R^{\sigma} \equiv_{\chi_{0}} R$ if and only if $\sigma(a_{0})=a_{0}$. In other words, ${\mathcal M}_{R}^{\chi_{0}}={\mathcal Q}(a_{0})$ is a field of definition. If $R(z)$ has degree $1$, then we may use $T=R^{-1} \in {\rm PSL}_{2}({\mathbb L})$ to obtain that $R(T(z))=z$, i.e., ${\mathcal M}_{R}^{\chi_{0}}={\mathcal Q}$, and that it is a field of definition of $R$.

\medskip

Let us now consider the case $k \neq 0$. 

\subsubsection*{Case of degree zero}
If $R(z)=a_{0} \neq 0$ and $T(z)=z/\sqrt[k]{a_{0}}$, then $S(z)=R(T(z)) \left(T'(z)\right)^{k}=1$, in particular, $R \equiv_{\chi_{k}} 1$, i.e., $\omega_{R}$ is definable over ${\mathcal M}_{R}^{\chi_{k}}={\mathcal Q}$.

\subsubsection*{Case of degree one}
Let $R(z)=(a_{0}+a_{1}z)/(b_{0}+b_{1}z)$, where 
{\small
$$\left( \begin{array}{cc}
a_{0} & a_{1}\\
b_{0} & b_{1}
\end{array}
\right) \in {\rm SL}_{2}({\mathbb L}).$$
}

If $T_{1}=R^{-1}$, then $S(z)=R(T_{1}(z)) \left(T_{1}'(z)\right)^{k}=z/(a_{0}z-b_{0})^{2k}$.  
For $a_{0}=0$ and $T_{2}(z)=\sqrt[k+1]{b_{0}^{2k}}\; z$ it holds that $S(T_{2}(z)) \left(T_{2}'(z)\right)^{k}=z$, that is, $\omega_{R}$ is definable over ${\mathcal M}_{R}^{\chi_{k}}={\mathcal Q}$. 
Let us now assume $a_{0}\neq 0$.

\smallskip
\noindent
(1) If $b_{0} \neq 0$, then, by taking $T_{3}(z)=b_{0}z/a_{0}$, one has that $Q(z)=S(T_{3}(z)) \left(T_{3}'(z)\right)^{k}=z/a_{0}^{k+1}b_{0}^{k-1}(z-1)^{2k}$. In order to compute ${\mathcal M}_{R}^{\chi_{k}}={\mathcal M}_{Q}^{\chi_{k}}$, one needs to find those $\sigma \in {\rm G}({\mathbb L}/{\mathcal Q})$ for which 
there exists some $T_{\sigma} \in {\rm PSL}_{2}({\mathbb L})$ satisfying that
{\small
$$Q^{\sigma}(z)=Q(T_{\sigma}(z)) \left(T'_{\sigma}(z)\right)^{k}.$$
}

Since the only zero of $\omega_{Q}=Q(z) dz^{k}$ (an also of $\omega_{Q}^{\sigma}$) is $0$, the only simple pole of $\omega_{Q}$ (and also of $\omega_{Q}^{\sigma}$) is $\infty$ and the only pole of order $2k$ of $\omega_{Q}$ (and also of $\omega_{Q}^{\sigma}$) is $1$, we must have that $T_{\sigma}(0)=0$, $T_{\sigma}(\infty)=\infty$ and $T_{\sigma}(1)=1$, i.e., $T_{\sigma}(z)=z$, in particular, 
$Q^{\sigma}(z)=Q(z)$. This ensures that
$\sigma(a_{0}^{k+1}b_{0}^{k-1})=a_{0}^{k+1}b_{0}^{k-1}$ and, in particular, ${\mathcal M}_{R}^{\chi_{k}}={\mathcal Q}(a_{0}^{k+1}b_{0}^{k-1})$ is a field of definition of $\omega_{R}$.

\smallskip
\noindent
(2) If $b_{0}=0$, then $S(z)=1/a_{0}^{2k}z^{2k-1}$. To compute the field of moduli, in this case, we need to find those $\sigma \in {\rm G}({\mathbb L}/{\mathcal Q})$ for which there exists $T_{\sigma} \in {\rm PSL}_{2}({\mathbb L})$ so that $S^{\sigma}(z)=S(T_{\sigma}(z)) \left(T'_{\sigma}(z)\right)^{k}$. If $k \geq 2$, then necessarily $T_{\sigma}(0)=0$ and $T_{\sigma}(\infty)=\infty$ (since $0$ is a pole of order $2k-1$ and $\infty$ is a simple pole); that is, $T_{\sigma}(z)=\lambda_{\sigma}z$. But in this case the above condition on $T_{\sigma}$ ensures we should have $\sigma(a_{0}^{2k})z=a_{0}^{2k}z$, that is, $\sigma(a_{0}^{2k})=a_{0}^{2k}$. In this way, ${\mathcal M}_{R}^{\chi_{k}}={\mathcal Q}(a_{0}^{2k})$ is field of definition of $\omega_{R}$. Let us now assume $k=1$. In this case, we may also have the possibility $T_{\sigma}(0)=\infty$ and $T_{\sigma}(\infty)=0$ (since $0$ and $\infty$ are simple poles); that is, $T_{\sigma}(z)=\lambda_{\sigma}/z$. The asked condition on $T_{\sigma}$ ensures that $\sigma(a_{0}^{2})=-a_{0}^{2}$. It follows that, if there is no $\sigma$ satisfying that $\sigma(a_{0}^{2})=-a_{0}^{2}$ (for instance if $a_{0}^{2}=\sqrt[3]{2}$), then ${\mathcal M}_{R}^{\chi_{1}}={\mathcal Q}(a_{0}^{2})$ is field of definition of $\omega_{R}$. On the other hand, if there are such $\sigma$'s (for instance when $a_{0} \notin \overline{\mathcal Q}$ or when $a_{0}^{2} \in \overline{\mathcal Q}$ has minimal polynomial $P(z) \in {\mathcal Q}[z]$ with a factor $(z^{2}-a_{0}^{4})$ in ${\mathbb L}(z)$ such as the case for $a_{0}^{2}=\sqrt{2}$), then ${\mathcal M}_{R}^{\chi_{1}}={\mathcal Q}(a_{0}^{4})$ (a subfield of index $2$ of the field of definition ${\mathcal Q}(a_{0}^{2})$ of $\omega_{R}$).
\end{proof}

Summarizing al, the above, is the following.

\begin{coro}\label{grado2}
Let $k \in {\mathbb Z}$ and ${\mathbb L}$ be an algebraically closed field and $R \in {\mathbb L}(z)$.                  
If ${\mathcal M}_{R}^{\chi_{k}}$ is perfect, then ${\rm FOD/FOM}_{\chi_{k}}(R) \leq 2$.  In particular, this holds 
if ${\mathbb L}$ is either of characteristic zero or the algebraic closure of a finite field.
\end{coro}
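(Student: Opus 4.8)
The plan is to combine the two results that have already handled the two extreme regimes for the degree of $R$, namely Theorem \ref{coromain} (for maps of degree at least two) and Lemma \ref{ejemplo1} (for maps of degree at most one), after first reducing to the case $k \geq 0$. The starting observation is that each $\chi_{k}$ is a universal right action satisfying the ${\rm G}({\mathbb L})$-property, so the whole machinery of the previous sections is available.

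First I would dispose of the sign of $k$. By Remark \ref{negaposi}, the assignment $\Theta$ identifies $\omega_{R}=R\,dz^{k}$ with $\Theta(\omega_{R})=\tfrac{1}{R}\,dz^{-k}$ in a way that preserves both the field of moduli, ${\mathcal M}_{R}^{\chi_{k}}={\mathcal M}_{1/R}^{\chi_{-k}}$, and the collection of fields of definition. Consequently ${\rm FOD/FOM}_{\chi_{k}}(R)={\rm FOD/FOM}_{\chi_{-k}}(1/R)$, and since $1/R$ has the same degree as $R$, there is no loss of generality in assuming $k \geq 0$.

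Next I would split on the degree of $R$. If $\deg R \geq 2$, then, as recorded in Section \ref{Se:minimal}, the group ${\rm Aut}_{\chi_{k}}(R)$ is finite; since ${\mathcal M}_{R}^{\chi_{k}}$ is perfect by hypothesis, Theorem \ref{coromain} applies verbatim and yields ${\rm FOD/FOM}_{\chi_{k}}(R)\leq 2$. If instead $\deg R \leq 1$, then the desired bound is exactly the content of Lemma \ref{ejemplo1} (which even establishes the sharper value $1$ when $k\neq 1$). In either case the bound ${\rm FOD/FOM}_{\chi_{k}}(R)\leq 2$ holds.

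Finally, for the ``in particular'' clause I would verify that the perfectness hypothesis is automatic in the two listed cases: in characteristic zero every field is perfect, and when ${\mathbb L}$ is the algebraic closure of a finite field, ${\mathcal M}_{R}^{\chi_{k}}$ is an algebraic extension of a finite field and hence perfect as well. I do not expect a genuine obstacle here, since the mathematical substance sits in Theorem \ref{coromain} and Lemma \ref{ejemplo1}; the only points requiring a little care are the reduction to $k \geq 0$ for the low-degree maps and the bookkeeping of the finiteness of ${\rm Aut}_{\chi_{k}}(R)$ that licenses the application of Theorem \ref{coromain}.
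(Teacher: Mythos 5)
Your proposal is correct and follows essentially the same route as the paper, which presents this corollary simply as a summary of Theorem \ref{coromain} (degree at least two, where ${\rm Aut}_{\chi_{k}}(R)$ is finite) together with Lemma \ref{ejemplo1} (degree at most one), with the reduction to $k\geq 0$ via Remark \ref{negaposi}. Your write-up merely makes explicit the case split and the perfectness bookkeeping that the paper leaves implicit.
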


\subsection{Sufficient conditions for a rational map to be definable over its $\chi_{k}$-field of moduli}
In \cite{Silv1}, Silverman noted that every rational map, either of even degree or $\chi_{\infty}$-equivalent to a polynomial, can be defined over its $\chi_{\infty}$-field of moduli.
In the proof of Proposition \ref{ejemplo2}, we will provide examples of rational maps $R$ of even degree which cannot be definable over their corresponding $\chi_{k}$-fields of moduli. In these examples, the pole divisor of the form $\omega_{R}=R(z) dz^{k}$ has an even degree. The following result asserts that this is the only obstruction.

\begin{coro}\label{suficiente}
Let $R \in {\mathbb L}(z)$ of degree at least two, $k \in {\mathbb Z}$ and $\omega_{R}=R(z) dz^{k}$. 
If  ${\mathcal M}_{R}^{\chi_{k}}$ is perfect and the divisor of poles of $\omega_{R}$ has odd degree, then 
$R$ is $\chi_{k}$-definible over ${\mathcal M}_{R}^{\chi_{k}}$, that is, ${\rm FOD/FOM}_{\chi_{k}}(R)=1$. In particular, if 
$R$ is a polynomial, then it is $\chi_{k}$-definible over ${\mathcal M}_{R}^{\chi_{k}}$.
\end{coro}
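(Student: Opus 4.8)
The plan is to run the argument behind Theorem \ref{coromain} with ${\mathbb K}={\mathcal M}_{R}^{\chi_{k}}$ (which is perfect by hypothesis) and to show that the odd-degree condition removes the only possible source of a quadratic extension. Since $\deg R\geq 2$, the group ${\rm Aut}_{\chi_{k}}(R)$ is finite, so Corollary \ref{coroaux} and Theorem \ref{teomain} apply. In each step where a quadratic extension could enter — both in producing a $\chi_{k}$-cocycle (Theorem \ref{teocociclo}) and in descending the genus-zero curve of Theorem \ref{teomain} — the obstruction is precisely the absence of a ${\mathbb K}$-rational point on a genus-zero curve defined over ${\mathbb K}$. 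The engine of the proof will be the following odd-degree refinement of Lemma \ref{lemaaux}: a smooth genus-zero curve over the perfect field ${\mathbb K}$ carrying a ${\mathbb K}$-rational divisor of odd degree has a ${\mathbb K}$-rational point (subtract a suitable multiple of the degree $-2$ canonical class to reach a ${\mathbb K}$-rational divisor of degree $1$, then apply Riemann--Roch exactly as in the proof of Lemma \ref{lemaaux}), and is therefore ${\mathbb K}$-isomorphic to ${\mathbb P}^{1}$.

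Assume first that ${\rm Aut}_{\chi_{k}}(R)$ is not a non-trivial cyclic group. Then Corollary \ref{coroaux}(1) already gives a $\chi_{k}$-cocycle over ${\mathbb K}$, and Theorem \ref{teomain} produces a genus-zero curve $B$ defined over ${\mathbb K}$, an isomorphism $L:{\mathbb P}^{1}\to B$ over ${\mathbb L}$, and an $S\in {\mathbb K}(B)$ with $\chi_{k}(L)(S)=R$, i.e. $L^{*}\eta=\omega_{R}$ for the $k$-form $\eta$ on $B$ determined by $S$. Since $S\in {\mathbb K}(B)$ and $B$ is defined over ${\mathbb K}$, the form $\eta$ is ${\mathbb K}$-rational, so its divisor of poles is a ${\mathbb K}$-rational divisor on $B$; as $L$ is an isomorphism, this divisor has degree equal to $\deg\,{\rm Pol}(\omega_{R})$, which is odd by hypothesis. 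By the engine above, $B$ has a ${\mathbb K}$-rational point, hence $B\cong_{\mathbb K}{\mathbb P}^{1}$, and the last paragraph of the proof of Theorem \ref{teomain} (now with ${\mathbb K}_{1}={\mathbb K}$) shows that ${\mathbb K}={\mathcal M}_{R}^{\chi_{k}}$ is a $\chi_{k}$-field of definition.

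The main obstacle is the remaining case, where ${\rm Aut}_{\chi_{k}}(R)=\langle A\rangle$ is a non-trivial cyclic group and Corollary \ref{coroaux}(2) only furnishes the cocycle over an a priori quadratic extension ${\mathbb K}_{1}$. Tracing the proof of Theorem \ref{teocociclo}(2), the sole obstruction to working over ${\mathbb K}$ is the existence of a ${\mathbb K}$-rational point on the genus-zero quotient curve $\hat{B}$ (the descent of ${\mathbb P}^{1}/\langle A\rangle$ along the Weil datum $\{S_{\sigma}^{-1}\}$, via $\hat{L}:{\mathbb P}^{1}\to\hat{B}$ with $\hat{L}^{\sigma}=\hat{L}\circ S_{\sigma}$). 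To remove it, I would transport $P:={\rm Pol}(\omega_{R})$ to $\hat{B}$. Every element of ${\rm Aut}_{\chi_{k}}(R)$ is a holomorphic automorphism of $\omega_{R}$, so $P$ is $\langle A\rangle$-invariant; choosing the branched cover $\pi:{\mathbb P}^{1}\to{\mathbb P}^{1}/\langle A\rangle$ over the prime field (so $\pi^{\sigma}=\pi$), the push-forward $\pi_{*}P$ has degree $\deg P$. The cocycle relation $(\omega_{R})^{\sigma}=\hat{T}_{\sigma}^{*}\omega_{R}$ gives $\hat{T}_{\sigma}(\sigma(P))=P$, and together with $\pi\circ\hat{T}_{\sigma}=S_{\sigma}\circ\pi$ and $\pi\circ\sigma=\sigma\circ\pi$ this yields $S_{\sigma}(\sigma(\pi_{*}P))=\pi_{*}P$; hence $\hat{L}_{*}(\pi_{*}P)$ is a ${\mathbb K}$-rational divisor on $\hat{B}$ of odd degree $\deg P$. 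By the engine, $\hat{B}$ has a ${\mathbb K}$-rational point, so the base point $b$ in the proof of Theorem \ref{teocociclo}(2) may be chosen ${\mathbb K}$-rational and avoiding the branch values, the cocycle is defined over ${\mathbb K}$, and we are reduced to the previous paragraph. The delicate computational point is exactly this verification that the equivariance of $P$ survives push-forward along the (ramified) map $\pi$.

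Finally, the polynomial case needs no parity hypothesis. If $R$ is a polynomial, then $\omega_{R}=R(z)\,dz^{k}$ has a single pole, at $\infty$, so $P$ is supported at the one point $\infty$, which is fixed by every element of ${\rm Aut}_{\chi_{k}}(R)$. The same equivariance $\hat{T}_{\sigma}(\sigma(P))=P$ then forces $\hat{T}_{\sigma}(\sigma(\infty))=\infty$, so $\pi_{*}P$ is a ${\mathbb K}$-rational divisor on $\hat{B}$ supported at the single point $\hat{L}(\pi(\infty))$; a ${\mathbb K}$-rational divisor with one-point support has that point ${\mathbb K}$-rational. This supplies the needed rational point directly, in every degree, and likewise makes $L(\infty)$ a ${\mathbb K}$-rational point of $B$, giving ${\rm FOD/FOM}_{\chi_{k}}(R)=1$ for all polynomials.
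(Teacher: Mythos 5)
Your proof is correct and follows the same route as the paper: produce a $\chi_{k}$-cocycle over ${\mathcal M}_{R}^{\chi_{k}}$, descend via Weil to a genus-zero model $B$ carrying the descended form, observe that the image of the pole divisor is a ${\mathcal M}_{R}^{\chi_{k}}$-rational divisor of odd degree, and extract a rational point by the canonical-divisor/Riemann--Roch trick of Lemma \ref{lemaaux}. Where you go beyond the paper is the cyclic case: the paper simply invokes Theorem \ref{teocociclo} to get a cocycle over the field of moduli, even though part (2) of that theorem a priori only yields one over a quadratic extension ${\mathbb K}_{1}$ when ${\rm Aut}_{\chi_{k}}(R)$ is non-trivial cyclic; your push-forward argument (transporting the $\langle A\rangle$-invariant, $S_{\sigma}$-equivariant pole divisor to the quotient curve $\hat{B}$ to produce an odd-degree ${\mathbb K}$-rational divisor there, hence a ${\mathbb K}$-rational base point for the part (1) construction) supplies exactly the justification the paper leaves implicit. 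The only caveat, shared with the paper, is the tacit assumption that a polynomial's form $\omega_{R}$ actually has a pole at $\infty$, which can fail for sufficiently negative $k$; this does not affect the comparison.
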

\begin{proof}
As a consequence of Theorem \ref{teocociclo}, there is a $\chi_{k}$-cocycle $\{T_{\sigma}\}_{\sigma \in {\rm G}({\mathbb L}/{\mathcal M}_{\chi_{k})}}$. 
Following similarly as in the first part of the proof of Theorem \ref{teomain}, 
there is a smooth algebraic curve $B$ of genus zero, defined over ${\mathcal M}_{R}^{\chi_{k}}$, there is an isomorphism $L:{\mathbb P}^{1} \to B$, defined over ${\mathbb L}$, and there is some $S \in {\mathbb L}(B)$, also defined over ${\mathcal M}_{R}^{\chi_{k}}$ such that $R=\chi_{k}(L)(S)$.

If $D$ is the divisor of poles of $\omega_{R}$, then the image divisor $L(D)$ is a ${\mathcal M}_{R}^{\chi_{k}}$-rational divisor. Let us assume this divisor has an odd degree. As $B$ has genus zero and it is defined over ${\mathcal M}_{R}^{\chi_{k}}$, we may proceed similarly as in the proof of Lemma \ref{lemaaux} to obtain a ${\mathcal M}_{R}^{\chi_{k}}$-rational point.
Now, proceeding similarly as in the last part of the proof of Theorem \ref{teomain}, we may conclude that $R$ is $\chi_{k}$-definable over ${\mathcal M}_{R}^{\chi_{k}}$.

If $R(z)$ is a polynomial, then $\omega_{R}$ has a unique pole at $\infty$. In this case, we proceed as above using the divisor $D=[\infty]$.
\end{proof}

\subsection{Real $\chi_{k}$-field of moduli}\label{Sec:real}
In this section, we assume ${\mathbb L}={\mathbb C}$, and we discuss the situation when the field of moduli is a subfield of ${\mathbb R}$.  

Let us consider the complex conjugation $J(z)=\overline{z}$ (an anti-holomorphic automorphism of the Riemann sphere $\widehat{\mathbb C}$).
Let us set by $\tau_{J}(z)=\overline{z} \in {\rm G}({\mathbb C})$ (the complex conjugation seen as an automorphism of the field ${\mathbb C}$). In this way, if $R \in {\mathbb C}(z)$, then $R^{\tau_{J}}=J \circ R \circ J$.
We set the pull-back of $\omega_{R}=R(z) dz^{k}$ by the reflection $J$ as the meromorphic $k$-form $J^{*}\omega_{R}:=R^{\tau_{J}}(z) dz^{k}$ (note that this is not the usual pull-back by $J$). The pull-back of $\omega_{R}$ under the extended M\"obius transformation $U=T \circ J$, where $T \in {\rm PSL}_{2}({\mathbb C})$ is defined as 
{\small $$(T \circ J)^{*}\omega_{R}=J^{*}(T^{*}\omega_{R})=R^{\tau_{J}}(T^{\tau_{J}}(z)) ((T^{\tau_{J}})'(z))^{k} dz^{k}.$$
}

If $(J \circ T)^{*}\omega_{R}=\omega_{R}$, i.e., $R^{\tau_{J}}(T^{\tau_{J}}(z)) \left((T^{\tau_{J}})'(z)\right)^{k}=R(z)$, then we say that 
$T \circ J$ is an {\it anti-holomorphic automorphism} of $\omega_{R}$. 

For instance, by taking $T=I$, we see that $J$ is an anti-holomorphic automorphism of $\omega_{R}$ if and only if $R^{\tau_{J}}=R$, i.e., if and only if $R \in {\mathbb R}(z)$.

In the case of $\chi_{\infty}$, it is known that a rational map $R$ has its $\chi_{\infty}$-field of moduli inside ${\mathbb R}$ if and only if there an extended M\"obius transformation (i.e., the composition of a M\"obius transformation with the complex conjugation) that commutes with $R$, and it is definable of ${\mathbb R}$ if and only if such an extended M\"obius transformation can be chosen to be of order two and with fixed points (i.e., a reflection) \cite{Hidalgo}. Below, we obtain a similar result for  $\chi_{k}$.

\begin{prop}\label{notita}
Let $k \in {\mathbb Z}$, $R \in {\mathbb C}(z)$ and $\omega_{R}=R(z) dz^{k}$. Then 
\begin{enumerate}[leftmargin=15pt]
\item ${\mathcal M}_{R}^{\chi_{k}} \leq {\mathbb R}$ if and only if $\omega_{R}$ admits 
an anti-holomorphic automorphism.

\item $R$ is $\chi_{k}$-definable over ${\mathbb R}$ if and only if $\omega_{R}$ admits a reflection as an anti-holomorphic automorphism.

\end{enumerate}
\end{prop}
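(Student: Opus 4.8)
The plan is to translate the assertion about the real field of moduli into the language of $\chi_k$-cocycles for the degree-two extension ${\mathbb C}/{\mathbb R}$, exactly mirroring the $\chi_\infty$ case from \cite{Hidalgo}. First I would prove part (1). Note that ${\rm G}({\mathbb C}/{\mathbb R})=\{e,\tau_J\}$, so ${\mathcal M}_R^{\chi_k}\leq{\mathbb R}$ holds precisely when $\tau_J\in U_R^{\chi_k}$, i.e.\ when $R^{\tau_J}\equiv_{\chi_k}R$. Unwinding the definition of $\chi_k$-equivalence, this says there is some $T\in{\rm PSL}_2({\mathbb C})$ with $R^{\tau_J}(z)=R(T(z))(T'(z))^k$. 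The content of part (1) is then just the observation that this equation is literally the condition $(T^{-1}\circ J)^*\omega_R=\omega_R$ (up to rewriting $T$ versus $T^{\tau_J}$, which I would track carefully using the pull-back formula displayed just before the proposition), so $\tau_J$ belongs to $U_R^{\chi_k}$ if and only if $\omega_R$ admits an anti-holomorphic automorphism. So part (1) is essentially a definition-chase; the only care needed is matching the conventions for how $J$ and $\tau_J$ interact with the coefficient-conjugation action.

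For the ``if'' direction of part (2), I would assume $U=T\circ J$ is a reflection fixing $\omega_R$ and produce a $\chi_k$-field of definition equal to ${\mathbb R}$. Because $U$ is a reflection, it has a circle of fixed points, and I may conjugate by a suitable M\"obius transformation so that $U$ becomes the standard conjugation $J$ itself; under this conjugation $\omega_R$ is replaced by a $\chi_k$-equivalent form which is now genuinely fixed by $J$, i.e.\ real-coefficiented. Concretely, the datum of a reflection $U$ with $U^*\omega_R=\omega_R$ is exactly a $\chi_k$-cocycle $T_{\tau_J}:=T$ for the extension ${\mathbb C}/{\mathbb R}$ satisfying the cocycle condition $T_{\tau_J}T_{\tau_J}^{\tau_J}=I$ (this order-two/reflection hypothesis is what makes the cocycle condition hold, whereas a general anti-holomorphic automorphism need not). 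By Corollary \ref{TeoWeil1} (Theorem \ref{TeoWeil0} specialized to $\chi_k$), such a $\chi_k$-cocycle which is a coboundary shows ${\mathbb R}$ is a field of definition; and over the quadratic extension ${\mathbb C}/{\mathbb R}$ the relevant descended genus-zero curve is $B$ with an $\mathbb{R}$-rational point (guaranteed because the real locus of a reflection is nonempty), so by Lemma \ref{lemaaux} the cocycle is in fact a coboundary over ${\mathbb R}$. I expect this to be the main obstacle: showing that the \emph{reflection} hypothesis, as opposed to a mere anti-holomorphic involution without real fixed points, is exactly what guarantees the associated genus-zero conic has a real point and hence is isomorphic to ${\mathbb P}^1$ over ${\mathbb R}$.

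For the ``only if'' direction of part (2), I would argue contrapositively along the lines of the $\chi_\infty$ result. Suppose $R$ is $\chi_k$-definable over ${\mathbb R}$; then after replacing $R$ by a $\chi_k$-equivalent map we may assume $R\in{\mathbb R}(z)$, so that $\omega_R$ is fixed by the standard conjugation $J$, which is a reflection. Hence $\omega_R$ admits a reflection as anti-holomorphic automorphism in that model, and transporting back through the equivalence $\chi_k(T_0)$ used to realize the real model yields a reflection $T_0\circ J\circ T_0^{-1}$ (conjugate of $J$, still order two with fixed points) fixing the original $\omega_R$. The essential point is that the conjugate of a reflection by a M\"obius transformation is again a reflection, so the ``order two with fixed points'' property is preserved under $\chi_k$-equivalence.

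Finally I would remark that the real subtlety separating part (1) from part (2) is precisely the arithmetic of the Brauer obstruction over ${\mathbb R}$: a general anti-holomorphic automorphism gives a $\chi_k$-cocycle which, via Weil descent, yields a genus-zero curve $B$ over ${\mathbb R}$ that may be the nontrivial conic (the anisotropic one, with no real point), in which case ${\mathcal M}_R^{\chi_k}\leq{\mathbb R}$ but ${\mathbb R}$ is not a field of definition; the reflection hypothesis forces $B\cong{\mathbb P}^1$ over ${\mathbb R}$ and removes this obstruction. This is entirely parallel to the picture for $\chi_\infty$ recalled from \cite{Hidalgo}, and I would present the proof so as to make that parallel transparent.
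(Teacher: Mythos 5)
Your proposal is correct and follows essentially the same route as the paper: part (1) is the same definition chase through the cocycle characterization (Lemma \ref{modulireal}), and both directions of part (2) come down to the fact that every reflection is M\"obius-conjugate to $z\mapsto\overline{z}$, so a reflection-invariant $k$-form is $\chi_{k}$-equivalent to a real one and conversely. Your supplementary Weil-descent discussion of the ``if'' direction of (2) is redundant given your direct conjugation argument (and the appeal to Lemma \ref{lemaaux} there is misplaced --- the ${\mathbb R}$-rational point comes from the nonempty fixed locus of the reflection, as your own parenthetical correctly notes, in the spirit of Remark \ref{observabien}), but it has the merit of making explicit the converse implication that the paper's written proof of (2) leaves implicit.
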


\begin{lemm}\label{modulireal}
Let $k \in {\mathbb Z}$ and $R \in {\mathbb C}(z)$.
\begin{enumerate}[leftmargin=15pt]
\item 
${\mathcal M}_{R}^{\chi_{k}} \leq {\mathbb R}$ if and only if there exists $T \in {\rm PSL}_{2}({\mathbb C})$ so that 
$R^{\tau_{J}}(z)=R(T(z)) \left(T'(z)\right)^{k}$.

\item $R$ is $\chi_{k}$-definable over 
${\mathbb R}$ if and only if there exists $T \in {\rm PSL}_{2}({\mathbb C})$ such that
{\small
$$R^{\tau_{J}}(z)=R(T_{\tau_{J}}(z)) \left(T_{\tau_{J}}' (z)\right)^{k}, \quad \mbox{$T_{\tau_{J}}:=T^{-1} \circ T^{\tau_{J}}$.}$$
}

\end{enumerate}
\end{lemm}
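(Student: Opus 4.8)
The plan is to dispatch the two statements separately: part (2) is a direct specialization of the descent criterion, while part (1) is read off from the definition of the field of moduli, its only nontrivial ingredient being the standard closedness property of $U_{R}^{\chi_{k}}$. For part (2) I would invoke Corollary \ref{TeoWeil1} with ${\mathbb L}={\mathbb C}$ and ${\mathbb K}={\mathbb R}$; the whole point is that ${\rm G}({\mathbb C}/{\mathbb R})=\{e,\tau_{J}\}$ has order two. For $\sigma=e$ one has $T_{e}=T^{-1}\circ T=I$, so the required identity reduces to $R(z)=R(z)$ and holds automatically; for $\sigma=\tau_{J}$ it is precisely the displayed equation with $T_{\tau_{J}}=T^{-1}\circ T^{\tau_{J}}$. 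I would also record the automatic consistency $T_{\tau_{J}}\circ T_{\tau_{J}}^{\tau_{J}}=I$, which follows from $\tau_{J}^{2}=e$ and shows that no condition beyond the single $\tau_{J}$-identity is imposed. Thus the quantifier ``for every $\sigma$'' in Corollary \ref{TeoWeil1} collapses to that one identity, which gives (2).

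For part (1) I would first observe that the condition ``there exists $T$ with $R^{\tau_{J}}(z)=R(T(z))(T'(z))^{k}$'' is exactly $R^{\tau_{J}}\equiv_{\chi_{k}}R$, i.e. $\tau_{J}\in U_{R}^{\chi_{k}}$. The implication ($\Leftarrow$) is then immediate: if $\tau_{J}\in U_{R}^{\chi_{k}}$ then $\langle\tau_{J}\rangle\subseteq U_{R}^{\chi_{k}}$, hence ${\mathcal M}_{R}^{\chi_{k}}={\rm Fix}(U_{R}^{\chi_{k}})\subseteq{\rm Fix}(\langle\tau_{J}\rangle)={\mathbb R}$.

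For the reverse implication, assume ${\mathcal M}_{R}^{\chi_{k}}\subseteq{\mathbb R}$. Since ${\mathbb R}={\rm Fix}(\tau_{J})$, the automorphism $\tau_{J}$ fixes ${\mathcal M}_{R}^{\chi_{k}}$ pointwise, so $\tau_{J}\in{\rm G}({\mathbb C}/{\mathcal M}_{R}^{\chi_{k}})$. It then remains to pass from ``$\tau_{J}$ fixes the field of moduli'' to ``$R^{\tau_{J}}\equiv_{\chi_{k}}R$'', that is, to the inclusion ${\rm G}({\mathbb C}/{\mathcal M}_{R}^{\chi_{k}})\subseteq U_{R}^{\chi_{k}}$, whence $\tau_{J}\in U_{R}^{\chi_{k}}$ and the desired $T$ is produced.

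The hard part is exactly this inclusion ${\rm G}({\mathbb C}/{\mathcal M}_{R}^{\chi_{k}})\subseteq U_{R}^{\chi_{k}}$, i.e. the closedness of $U_{R}^{\chi_{k}}$: it fails for an arbitrary subgroup of ${\rm Aut}({\mathbb C})$, so it must be argued from the structure of $R$. The key points I would use are that $U_{R}^{\chi_{k}}$ contains the open subgroup ${\rm G}({\mathbb C}/F_{0})$, where $F_{0}$ is the finitely generated field generated by the coefficients of a normalized $R$ (if $\sigma$ fixes these coefficients then $R^{\sigma}=R$), so $U_{R}^{\chi_{k}}$ is open, and that $\chi_{k}$-equivalence of $R$ is cut out by finitely many prime-field invariants of the $k$-form $\omega_{R}$. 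An automorphism fixing ${\mathcal M}_{R}^{\chi_{k}}$ fixes those invariants, hence carries $R$ to a $\chi_{k}$-equivalent map; this is precisely the property already underlying the application of Theorem \ref{teocociclo} over ${\mathcal M}_{R}^{\chi_{k}}$ in Corollary \ref{coroaux}, so ${\rm G}({\mathbb C}/{\mathcal M}_{R}^{\chi_{k}})=U_{R}^{\chi_{k}}$. Granting this identification, the reverse implication of (1) follows at once.
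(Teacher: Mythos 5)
Your proposal is correct, and for part (2) it is exactly the paper's argument: the paper's entire proof is the one-line remark that the lemma is the special case of Corollary \ref{TeoWeil1} with ${\mathbb L}={\mathbb C}$, ${\mathbb K}={\mathbb R}$, where ${\rm G}({\mathbb C}/{\mathbb R})=\{e,\tau_{J}\}$ collapses the quantifier to the single $\tau_{J}$-identity, just as you say. For part (1) you actually do more than the paper: the paper treats the equivalence ``${\mathcal M}_{R}^{\chi_{k}}\leq{\mathbb R}$ iff $\tau_{J}\in U_{R}^{\chi_{k}}$'' as immediate from the definition ${\mathcal M}_{R}^{\chi_{k}}={\rm Fix}(U_{R}^{\chi_{k}})$, whereas you correctly isolate the one direction that is not formal, namely the inclusion ${\rm G}({\mathbb C}/{\mathcal M}_{R}^{\chi_{k}})\subseteq U_{R}^{\chi_{k}}$. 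You are right that this is a genuine closedness statement and that it is exactly the hypothesis the paper already invokes silently when it applies Theorem \ref{teocociclo} over ${\mathcal M}_{R}^{\chi}$ in Corollary \ref{coroaux}; your sketch of it (openness of $U_{R}^{\chi_{k}}$ via the finitely generated field of coefficients, plus finitely many equivalence invariants) is loose but gestures at the standard argument, and since the paper itself assumes the fact without proof, this does not count against you. In short: same route, with part (1) handled more honestly than in the source.
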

\begin{proof}
This is an special case of Corollary \ref{TeoWeil1} with ${\mathbb L}={\mathbb C}$ and  ${\mathbb K}={\mathbb R}$.
\end{proof}

\begin{rema}
Let us notice the difference between the conditions  (1) and (2) of Lemma \ref{modulireal}. The M\"obius transformation in (1) is of general form, but the one in (2) has a special form.
For instance, for $k=1$, $T_{\tau_{J}}(z)=(\lambda z +it)/(is z +\overline{\lambda})$, where $\lambda \in {\mathbb C}$, $t,s \in {\mathbb R}$ and $|\lambda|^{2}+ts=1$.
\end{rema}

\begin{proof}[Proof Proposition \ref{notita}]

(1) Lemma \ref{modulireal} asserts that ${\mathcal M}_{R}^{\chi_{k}}$ is a subfield of ${\mathbb R}$ if and only if there is some $T \in {\rm PSL}_{2}({\mathbb C})$ so that $R^{\tau_{J}}(z)=R(T(z))\left(T'(z)\right)^{k}$, equivalently, $R(z)=R^{\tau_{J}}\left(T^{\tau_{J}}(z)\right) \left(\left(T^{\tau_{J}}\right)'\right)^{k}$. It follows that
$(T \circ J)^{*} \omega_{R}=\omega_{R}$.
(2) By the definition, $R$ is $\chi_{k}$-definable over ${\mathbb R}$ if and only if there is some $T \in {\rm PSL}_{2}({\mathbb C})$ and some $S(z) \in {\mathbb R}(z)$ with $\omega_{R}=T^{*}\omega_{S}$, where $\omega_{S}=S(z) dz^{k}$. Since the reflection $J(z)=\overline{z}$ 
is an anti-holomorphic automorphism of $\omega_{S}$, then we may see that the reflection $T^{-1} \circ J \circ T$ is an anti-holomorphic automorphism of $\omega_{R}$. 
\end{proof}

\subsection{Example 1} Let $k=1$ and $R(z)=z/(z-1)(z-e^{i\theta})$, so $\omega_{R}=R(z) dz$. As the reflection $U(z)=e^{i\theta}\overline{z}$ satisfies that $U^{*}\omega_{R}=\omega_{R}$, Proposition \ref{notita} asserts that $R$ is $\chi_{1}$-definable over ${\mathbb R}$. In fact, if we consider $T(z)=e^{i\theta/2}z$, then 
{\small
$$T^{*}\omega_{R}=\frac{e^{i\theta} z dz}{(e^{i\theta/2} z-1)(e^{i\theta/2}z-e^{i\theta})}=\frac{z dz}{z^{2}-2\cos(\theta/2)z+1}.$$
}

\subsection{Existence of rational maps of degree at least two which are not definable over their $\chi_{k}$-field of definition}

\begin{prop}\label{ejemplo2}
For each $k \in {\mathbb Z}$, there are rational maps $R\in {\mathbb C}(z)$, of degree $d \geq 2$, with ${\mathcal M}_{R}^{\chi_{k}} \leq {\mathbb R}$ but not $\chi_{k}$-definable over ${\mathbb R}$.
\end{prop}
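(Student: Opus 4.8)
The plan is to reformulate the statement through Proposition \ref{notita} and then exhibit a $k$-form whose only anti-holomorphic symmetry is the fixed-point-free antipodal involution. By Proposition \ref{notita}(1), producing $R$ with ${\mathcal M}_R^{\chi_k}\le{\mathbb R}$ amounts to producing a form $\omega_R=R(z)\,dz^k$ that admits \emph{some} anti-holomorphic automorphism, while Proposition \ref{notita}(2) identifies non-definability over ${\mathbb R}$ with the \emph{absence of a reflection} among those anti-holomorphic automorphisms. So the whole problem reduces to building, for each $k\ge 1$ (the cases $k<0$ following via the duality $\Theta$ of Remark \ref{negaposi}, and $k=0$ handled by the same scheme with $\omega_R=R$ a function), a form of degree $\ge 2$ whose anti-holomorphic automorphisms are all fixed-point-free.

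First I would fix as target symmetry the antipodal map $U(z)=-1/\overline{z}$, an involution with no fixed points since $U(z)=z$ forces $|z|^2=-1$. Computing $(T\circ J)^{*}\omega_R$ from the formula of Section \ref{Sec:real} with $T(z)=-1/z$, the invariance $U^{*}\omega_R=\omega_R$ becomes the functional equation $z^{2k}R(z)=R^{\tau_J}(-1/z)$. I would satisfy it by prescribing the zeros and poles of $R$ in antipodal pairs $\{a,-1/\overline{a}\}$, using the pair $\{0,\infty\}$ to absorb the order-$2k$ pole of $dz^k$ at $\infty$ (so that $R$ carries a pole of order $2k$ at $0$), and placing the remaining $2k$ poles and all $4k$ zeros in antipodal pairs away from $0,\infty$. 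This gives a map of degree $4k$ whose form-divisor $\mathrm{div}(\omega_R)=\mathrm{div}(R)-2k[\infty]$ is $U$-invariant. Invariance of the divisor only yields $U^{*}\omega_R=\lambda\,\omega_R$; since $U^{*}$ is anti-linear and $U^2=I$, applying $U^{*}$ twice gives $|\lambda|=1$, and rescaling $R$ by a suitable unimodular constant normalizes $\lambda=1$. Thus $U$ is a genuine anti-holomorphic automorphism of $\omega_R$ and ${\mathcal M}_R^{\chi_k}\le{\mathbb R}$.

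The decisive step is ruling out reflections, and here I would use the elementary fact that, once nonempty, the set of anti-holomorphic automorphisms of $\omega_R$ is a single coset $\{V\circ U : V\in{\rm Aut}_{\chi_k}(R)\}$ of the holomorphic automorphism group. Hence it suffices to arrange ${\rm Aut}_{\chi_k}(R)=\{I\}$: then $U$ is the \emph{only} anti-holomorphic automorphism, and being fixed-point-free it is no reflection, so by Proposition \ref{notita}(2) the map $R$ is not $\chi_k$-definable over ${\mathbb R}$. To force ${\rm Aut}_{\chi_k}(R)=\{I\}$ I would note that any $V\in{\rm Aut}_{\chi_k}(R)$ permutes the zeros and poles of $\omega_R$ preserving orders; the two poles of order $2k$ pin $V$ to the subgroup fixing $\{0,\infty\}$ (maps $z\mapsto\alpha z$ or $z\mapsto\alpha/z$), and a generic choice of the remaining antipodal pairs leaves no nontrivial such $V$.

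The main obstacle is precisely this last genericity argument: one must verify that the joint constraints of antipodal symmetry and prescribed pole orders still leave enough freedom to choose the configuration with trivial M\"obius self-symmetry while keeping the degree equal to $4k$ (respectively $4$ for $k=0$). This is a dimension count on the space of antipodally paired divisors — each pair being determined by one free point — weighed against the finitely many conjugacy types of finite subgroups of ${\rm PSL}_2({\mathbb C})$ listed in Theorem \ref{teo1}; a single explicit generic representative then settles each $k$. Finally, the pole divisor of $\omega_R$ has even degree $6k$, consistent with Corollary \ref{suficiente}, which guarantees that odd pole degree would have \emph{forced} definability over the field of moduli; the construction therefore necessarily inhabits the even-pole-degree regime.
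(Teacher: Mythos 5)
Your proposal is correct in outline and proves the stated claim, but it organizes the argument differently from the paper. Both proofs run through Proposition \ref{notita} and both realize ${\mathcal M}_{R}^{\chi_{k}}\leq{\mathbb R}$ via a fixed-point-free anti-holomorphic involution of the shape $z\mapsto -c/\overline{z}$ (the paper uses $T(z)=-r^{2}/z$ composed with $J$). The divergence is in how reflections are excluded: the paper writes down one explicit family, $R(z)=(z-1)(z+r^{2})(z^{2}-r^{2}e^{2i\theta})/\lambda z^{3}$ (and its $k$-th power for $k>1$, a degree-$4$ variant for $k=0$), and eliminates reflections by hand, tracking where a putative reflection must send the zero and pole divisors; you instead observe that the anti-holomorphic automorphisms of $\omega_{R}$, once nonempty, form a single coset of ${\rm Aut}_{\chi_{k}}(R)$, so that it suffices to force ${\rm Aut}_{\chi_{k}}(R)=\{I\}$, after which the antipodal involution is the \emph{unique} anti-holomorphic automorphism and, being fixed-point-free, is no reflection. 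That coset observation is a genuine simplification the paper does not use, and your normalization of the multiplicative constant ($|\lambda|=1$ from $U^{2}=I$, then rescale $R$) is correct. The price is that the burden shifts entirely to the claim that a generic antipodally paired configuration has trivial M\"obius stabilizer; you flag this as the main obstacle but do not discharge it, whereas the paper does the corresponding work explicitly through the cross-ratio analysis of the four-point subsets lying on circles and the condition $r\neq r_{\theta}$ (citing \cite{Hidalgo:erratum}). So to make your argument complete you would still need either the dimension count carried out, or one explicit representative per $k$ with the triviality of ${\rm Aut}_{\chi_{k}}(R)$ verified --- which is essentially the computation the paper performs. Your closing remark that the pole divisor necessarily has even degree, by Corollary \ref{suficiente}, matches the paper's own observation in the introduction.
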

\begin{proof}
As previously noticed in Remark \ref{negaposi}, we only need to consider $k \geq 0$.

(1) Let us first consider the case $k=1$.
Let $r>1$, $\theta \in {\mathbb R}$ be so that $e^{-2i\theta} \neq e^{2i\theta} \neq -1$ and $\lambda \in {\mathbb C}$ be so that $\lambda=-\overline{\lambda}e^{2i\theta}$. It can be seen (by using the cross-ratio) that the only subsets of cardinality $4$ of $\{-r^{2},0,1,\infty,-re^{i\theta},re^{i\theta}\}$ which are contained in a circle are given by
{\small
$$\{-r^{2},0,1,\infty \}, \quad
\{-re^{i\theta},0,re^{i\theta},\infty\}, \quad 
\{1,re^{i\theta},-r^{2},-re^{i\theta}\}.$$
}

There is a value $r_{\theta}$ so that if $r \neq r_{\theta}$, then the (classes of) cross-ratios of these three $4$-subsets are different (see details in \cite{Hidalgo:erratum}). Let us assume $r \neq r_{\theta}$.
Set
{\small
$$R(z)=\frac{(z-1)(z+r^{2})(z^{2}-r^{2}e^{2i\theta})}{\lambda z^{3}}$$
}
and $\omega_{R}=R(z) dz$ the corresponding meromorphic $1$-form. 

The zeroes of $\omega_{R}$ are all simple and given by: $1$, $-r^{2}$ and $\pm re^{i\theta}$; and the poles are $0$ and $\infty$, both of order $3$.
If we take $T(z)=-r^{2}/z$, then, since $\lambda=-\overline{\lambda}e^{2i\theta}$, we obtain $T^{*} \omega_{R}=\omega_{R}^{\tau}$. In this way, ${\mathcal M}^{\chi}_{R} \leq {\mathbb R}$.

To prove that $R$ is not $\chi_{k}$-definable over ${\mathbb R}$, we need to check (by Proposition \ref{notita}) that it has no reflection as an anti-holomorphic automorphism. Assume, by the contrary, that there is a reflection $U$ as an anti-holomorphic automorphism of $\omega_{R}$, i.e., 
{\small
$$U^{*} \omega_{R}=\frac{(z-1)(z+r^{2})(z^{2}-r^{2}e^{2i\theta}) dz}{\lambda z^{3}}.$$
}

Since the poles of $U^{*} \omega_{R}$ are $0$ and $\infty$, we must have that
$U(\{0,\infty\})=\{0,\infty\}.$
Since the zeroes of $U^{*} \omega_{R}$ are $1$, $-r^{2}$ and $\pm re^{-i\theta}$, we must have that $U(\{1, -r^{2}, -re^{-i\theta}, re^{-i\theta}\})=\{1, -r^{2}, -re^{i\theta}, re^{i\theta}\}$.

There is no reflection interchanging $0$ with $\infty$ and satisfying the above property (since for any reflection $U$, interchanging $0$ with $\infty$, must satisfy that $U(1)>0$). In this way, $U(z)=\mu \overline{z}$, where $|\mu|=1$. But again, to satisfy the above, one must have $\mu=1$, i.e., $U(z)=J(z)=\overline{z}$. But, as   $e^{-2i\theta} \neq e^{2i\theta}$,
{\small
$$J^{*}\omega_{R}=\frac{(z-1)(z+r^{2})(z^{2}-r^{2}e^{-2i\theta}) dz}{\lambda z^{3}} \neq \frac{(z-1)(z+r^{2})(z^{2}-r^{2}e^{2i\theta}) dz}{\lambda z^{3}},$$
}
a contradiction (so, $R$ is not $\chi_{k}$-definable over ${\mathbb R}$).

(2) Let us now consider the case $k>1$. Let  
{\small
$$R(z)=\left(\frac{(z-1)(z+r^{2})(z^{2}-r^{2}e^{2i\theta})}{\lambda z^{3}}\right)^{k},$$}
 with the same restrictions on the variables $r$, $\theta$ and $\lambda$ as before. The same arguments as above apply in this example.

(3) For $k=0$, we may use 
{\small $$R(z)=\frac{(z-1)(z+r^{2})(z^{2}-r^{2}e^{2i\theta})}{\lambda z^{2}}$$} 
but with $\lambda=\overline{\lambda} e^{2i\theta}$. Other examples (of Belyi rational maps) were provided in \cite{Couv}.
\end{proof}

\section{A final remark}\label{Sec:flat}
In some contexts, one might be interested in the pull-back up to scalars (for instance, if interested in flat structures on Riemann surfaces). In this setting, we may define the $P\chi_{k}$-action of ${\rm PSL}_{2}({\mathbb C})$ on ${\mathbb C}(z)$ by
$$S \equiv_{P\chi_{k}} R \iff  \exists\; T \in {\rm PSL}_{2}({\mathbb C}) \; \mbox{and} \; \exists\; \lambda \in {\mathbb C}^{*}={\mathbb C} \setminus \{0\} \;\mbox{such that} \;  \omega_{S}=\lambda T^{*} \omega_{R},$$
where, as before, $\omega_{R}=R(z) dz^{k}$ and $\omega_{S}=S(z) dz^{k}$. The field of moduli, in this case, is ${\mathcal M}_{R}^{P\chi_{k}}={\rm Fix}(\Gamma_{R}),$ where
$\Gamma_{R}=\{ \sigma \in {\rm G}({\mathbb C}): R^{\sigma} \equiv_{P\chi_{k}} R\}<{\rm G}({\mathbb C}).$

It is clear from the definitions that ${\mathcal M}_{R}^{P\chi_{k}} \leq {\mathcal M}_{R}^{\chi_{k}}$. In this case, instead of considering the group of automorphisms ${\rm Aut}(\omega_{R})$, we need to consider the group
$${\rm Aut}(\omega_{R})^{P}=\{ T \in {\rm PSL}_{2}({\mathbb C}): \exists\; \lambda_{T} \in {\mathbb C}^{*}, \; \omega_{R}=\lambda_{T} T^{*} \omega_{R}\}$$
which may not be a finite group; for instance, if $\omega=z^{2} dz$, then ${\rm Aut}(\omega)=\langle A(z)=e^{2 \pi i/3}z\rangle \cong {\mathbb Z}_{3}$ and ${\rm Aut}(\omega)^{P} \cong {\mathbb C}^{*}$. But, if the number of zeroes and poles is at least $3$, 
then this group is finite. The reason for this fact is the following. If $T^{*}\omega=\lambda \omega$, for some $\lambda \in {\mathbb C}^{*}$, then $T$ must preserve the set of zeroes and the set of poles of $\omega$. But a M\"obius transformation is uniquely determined by its action at $3$ different points. Under this assumption, we may follow the same arguments as in the previous section to obtain the following.

\begin{theo}\label{projective}
Let $R \in {\mathbb C}(z)$ and $k \in {\mathbb Z}$. If the number of zeroes and poles of the form $\omega_{R}=R(z) dz^{k}$ is at least $3$, then $R$ has a $\chi_{k}$-field of definition of degree at most $2$ over 
${\mathcal M}_{R}^{P\chi_{k}}$.
\end{theo}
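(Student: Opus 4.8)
The plan is to run the cocycle-and-descent scheme of Theorems \ref{teocociclo}, \ref{teomain} and \ref{coromain}, now for the projective action $P\chi_{k}$, using the finiteness of ${\rm Aut}(\omega_{R})^{P}$ as the substitute for the finiteness of ${\rm Aut}_{\chi_{k}}(R)$. First I would reduce to $k \geq 0$ by means of the isomorphism $\Theta$ of Remark \ref{negaposi}, which interchanges zeros and poles and sends $P\chi_{k}$ to $P\chi_{-k}$, so the hypothesis on the number of zeros and poles is unaffected. Put ${\mathbb K}={\mathcal M}_{R}^{P\chi_{k}}$, which is perfect since ${\mathbb L}={\mathbb C}$ has characteristic zero. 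The role of the hypothesis is exactly the one indicated before the statement: any $T \in {\rm PSL}_{2}({\mathbb C})$ with $T^{*}\omega_{R}=\lambda\,\omega_{R}$ must preserve, setwise and with multiplicities, the set $Z \cup P$ of zeros and poles of $\omega_{R}$; as $|Z \cup P| \geq 3$ and a M\"obius transformation is determined by its action on three points, ${\rm Aut}(\omega_{R})^{P}$ embeds in the finite symmetric group of $Z \cup P$, hence is finite. This finiteness is what all the arguments of Section \ref{Sec:actions} require.

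Next I would build a Weil datum. For each $\sigma \in {\rm G}({\mathbb C}/{\mathbb K})$ (for which $R^{\sigma} \equiv_{P\chi_{k}} R$, as ${\mathbb K}$ is the perfect field of moduli) choose $\hat{T}_{\sigma}$ with $\omega_{R^{\sigma}}=\lambda_{\sigma}\,\hat{T}_{\sigma}^{*}\omega_{R}$; such $\hat{T}_{\sigma}$ carries the zero-pole divisor of $\omega_{R}$ to that of $\omega_{R^{\sigma}}$, and the only freedom in its choice is right composition by ${\rm Aut}(\omega_{R})^{P}$. Following the proof of Theorem \ref{teocociclo} verbatim -- passing to a branched regular cover with deck group ${\rm Aut}(\omega_{R})^{P}$ to produce uniquely determined maps $S_{\sigma}$ -- the collection $\{S_{\sigma}^{-1}\}$ satisfies Weil's cocycle condition over ${\mathbb K}$, so Weil's descent theorem yields a genus zero curve defined over ${\mathbb K}$. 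The zero-pole divisor of $\omega_{R}$ descends to a ${\mathbb K}$-rational divisor, and the Riemann-Roch computation of Lemma \ref{lemaaux} supplies a rational point: over ${\mathbb K}$ itself when ${\rm Aut}(\omega_{R})^{P}$ is not a non-trivial cyclic group, and over an at most quadratic extension ${\mathbb K}_{1}$ in the cyclic case, exactly as in part (2) of Theorem \ref{teocociclo} and Remark \ref{observabien}. Invoking Lemma \ref{finitedegreeextension} to legitimate Weil descent even though ${\mathbb C}$ is not finitely generated over ${\mathbb K}_{1}$, this produces an honest $P\chi_{k}$-cocycle $\{T_{\sigma}\}_{\sigma \in {\rm G}({\mathbb C}/{\mathbb K}_{1})}$ with $T_{\sigma\tau}=T_{\sigma}\circ T_{\tau}^{\sigma}$ and $\omega_{R^{\sigma}}=\lambda_{\sigma}\,T_{\sigma}^{*}\omega_{R}$, together with an isomorphism $L:{\mathbb P}^{1} \to B$ over ${\mathbb C}$, where $B$ has genus zero and is ${\mathbb K}_{1}$-isomorphic to ${\mathbb P}^{1}$, satisfying $L^{\sigma}=L \circ T_{\sigma}$.

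The genuinely new ingredient, and the step I expect to be the main obstacle, is the scalar $\lambda_{\sigma}$, which is absent from the $\chi_{k}$ analysis of Section \ref{Sec1}. Transporting $\omega_{R}$ to $B$ via $\eta:=(L^{-1})^{*}\omega_{R}$ and using $L^{\sigma}=L \circ T_{\sigma}$ together with $\omega_{R^{\sigma}}=\lambda_{\sigma}\,T_{\sigma}^{*}\omega_{R}$, a short computation gives $\eta^{\sigma}=\lambda_{\sigma}\,\eta$; comparing with Weil's relation shows that $\sigma \mapsto \lambda_{\sigma}$ is a $1$-cocycle, $\lambda_{\sigma\tau}=\lambda_{\sigma}\lambda_{\tau}^{\sigma}$, with values in ${\mathbb C}^{*}$. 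To trivialize it I would evaluate the leading coefficient $c(\eta) \in {\mathbb C}^{*}$ of $\eta$ at the ${\mathbb K}_{1}$-rational point in a ${\mathbb K}_{1}$-rational local coordinate: since that point is fixed by ${\rm G}({\mathbb C}/{\mathbb K}_{1})$ one gets $c(\eta)^{\sigma}=\lambda_{\sigma}\,c(\eta)$, so $\lambda_{\sigma}=c(\eta)^{\sigma}/c(\eta)$ is a coboundary (this is Hilbert's Theorem $90$ for the multiplicative group, made explicit after reducing to the algebraic field ${\mathbb L}_{R,{\mathbb K}_{1}}$ of Lemma \ref{finitedegreeextension}). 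Hence $\eta/c(\eta)$ is fixed by every $\sigma$, so it descends to a form over ${\mathbb K}_{1}$, and carrying it back through the ${\mathbb K}_{1}$-isomorphism $B \cong {\mathbb P}^{1}$ produces, as in the last part of the proof of Theorem \ref{teomain}, a defining datum for $R$ over the at most quadratic extension ${\mathbb K}_{1}$ of ${\mathcal M}_{R}^{P\chi_{k}}$. The delicate point is precisely this reconciliation of the projective (up to scalar) normalization with an honest $k$-form: one must verify that removing $\{\lambda_{\sigma}\}$ costs no field extension beyond the quadratic one already forced by the cyclic case. The computation above is what I would rely on, since it identifies the trivializing scalar as the canonically determined $c(\eta)$ rather than an uncontrolled one; checking carefully that this identification holds -- and not merely that the cohomology class vanishes abstractly -- is the heart of the argument.
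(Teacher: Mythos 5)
Your proposal follows the same route the paper takes---the paper's proof of Theorem \ref{projective} is essentially the paragraph preceding it, namely that finiteness of ${\rm Aut}(\omega_{R})^{P}$ (forced by having at least three zeros and poles, since a M\"obius transformation is determined by three points) lets one rerun the cocycle-and-descent arguments of the earlier sections---so your argument is correct and matches the intended one. The one place where you go beyond what the paper records is the explicit trivialization of the scalar cocycle $\lambda_{\sigma}$ via the leading coefficient of $\eta=(L^{-1})^{*}\omega_{R}$ at the ${\mathbb K}_{1}$-rational point; the paper leaves this step implicit, and your computation is exactly the right way to verify that passing from the projective class back to an honest $k$-form costs no field extension beyond the at most quadratic one already present.
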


\subsection{Example}
Let us consider a meromorphic $2$-form $\omega=R(z) dz^{2}$, where $R \neq 0$, over $\widehat{\mathbb C}$. Let $E_{\omega}$ be the (finite) set of zeroes and poles of $\omega$ and let us consider the planar Riemann surface $S_{\omega}=\widehat{\mathbb C} \setminus E_{\omega}$, which has signature $(0;\infty, \stackrel{\#E_{\omega}}{\cdots}, \infty)$.
We assume the poles of $\omega$ to be simple ones (that is, $\omega$ is integrable on $S_{\omega}$).  A flat structure on $S_{\omega}$ is induced by $\omega$: local coordinates for $z_{0} \in S_{\omega}$ have the form $\rho(z)=\int_{z_{0}}^{z} \sqrt{R(z)} dz$. The local change of coordinates has the form $z \mapsto \pm z +c$, where $c \in {\mathbb C}$. 
If $\lambda \in {\mathbb C}^{*}$, then $\lambda \omega$ provides a new system of coordinates; they are given by amplification of the originals by $\sqrt{\lambda}$ (i.e. the change of coordinates still of the above form) and in particular, it provides the same flat structure on $S_{\omega}$ given by $\omega$. 
The flat structures defined by the meromorphic $2$-forms $\omega_{1}$ and $\omega_{2}$ are equivalent if there exists a M\"obius transformation $T \in {\rm PSL}_{2}({\mathbb C})$ so that $T:S_{\omega_{1}} \to S_{\omega_{2}}$ is (in the corresponding induced flat structures) affine. This is equivalent to have $T^{*}\omega_{2}=\lambda \omega_{1}$ for a suitable $\lambda \in {\mathbb C}^{*}$.

If $\sigma \in {\rm G}({\mathbb C})$, then we may consider the meromorphic $2$-form $\omega^{\sigma}=R^{\sigma}(z) dz^{2}$. We note that $E_{\omega^{\sigma}}$ and $E_{\omega}$ have the same cardinality. So, $S_{\omega^{\sigma}}$ is a new Riemann surface with a flat structure of the same signature as $S_{\omega}$. It could be that these two flat structures are not equivalent, that is, there might not be a M\"obius transformation $T \in {\rm PSL}_{2}({\mathbb C})$ with $\omega^{\sigma}=\lambda_{\sigma} T^{*}\omega$. The field of moduli of the flat structure $S_{\omega}$ is the field  ${\mathcal M}_{R}^{P\chi_{2}}$. Theorem \ref{projective} ensures that this flat structure is definable over an extension of degree at most $2$ over ${\mathcal M}_{R}^{P\chi_{2}}$ if the cardinality of $E_{\omega}$ is at least $3$.

\subsubsection{}
If $\omega=R(z) dz^{2}$ has $E_{\omega}$ of cardinality equal to $3$, then, up to a M\"obius transformation, we may assume
$E_{\omega}=\{\infty,0,1\}$. We may also assume that $\infty$ is a pole (since the degree of a quadratic form is $-4$). In this way, up to scalars, $R(z)=z^{a}(z-1)^{b}$, where $a,b \in {\mathbb Z}-\{0\}$ with $a+b>-4$. In particular, ${\mathcal M}_{R}^{P\chi_{2}}={\mathbb Q}$ is a $\chi_{2}$-field of definition of $R$.

\subsubsection{}
If $\omega=R(z) dz^{2}$ has $E_{\omega}$ of cardinality equal to $4$, then, up to a M\"obius transformation, we may assume
$E_{\omega}=\{\infty,0,1, \mu\}$, where $\mu \in {\mathbb C} \setminus \{0,1\}$ and that $\infty$ is a pole. In this way, up to scalars, $R(z)=z^{a}(z-1)^{b}(z-\mu)^{c}$, where $a,b,c \in {\mathbb Z} \setminus \{0\}$ with $a+b+c>-4$. In particular, ${\mathcal M}_{R}^{P\chi_{2}}<{\mathbb Q}(\mu)$. To compute ${\mathcal M}_{R}^{P\chi_{2}}$, we need to determine those $\sigma \in {\rm G}({\mathbb C})$ for which there exists a M\"obius transformation $T_{\sigma}$ and some $\lambda_{\sigma} \in {\mathbb C}^{*}$ with 
$T^{*}\omega=\lambda_{\sigma}\omega^{\sigma}=\lambda_{\sigma} z^{a}(z-1)^{b}(z-\sigma(\mu))^{c} dz^{2}$. This obligates us to have 
$$T_{\sigma}: \{\infty,0,1,\sigma(\mu)\} \mapsto \{\infty,0,1,\mu\}.$$

Also, there are restrictions in the sense that $T_{\sigma}$ should send a zero (respectively, a pole) to a zero (respectively, a pole), preserving the orders. The above obligates to have $T_{\sigma} \in {\mathbb G}=\langle A(z)=1/z, B(z)=\sigma(\mu)/(\sigma(\mu)-z)\rangle \cong {\mathfrak S}_{3}$. This ensures that 
${\mathbb Q}(j(\mu))<{\mathcal M}_{R}^{P\chi_{2}}<{\mathbb Q}(\mu),$
where $j(\mu)=(1-\mu+\mu^{2})^{3}/\mu^{2}(1-\mu)^{2}$ is the classical elliptic $j$-function. In particular, ${\mathbb Q}(\mu)$ is an extension of degree either $1$, $2$, $3$ or $6$ over ${\mathcal M}_{R}^{P\chi_{2}}$. The situation will depends on the values of $a,b,c$; for instance, if $a>0$, $b>0$ and $c>0$ are different, then $T_{\sigma}(z)=z$ and $\sigma(\mu)=\mu$, obtaining the equality ${\mathcal M}_{R}^{P\chi_{2}}={\mathbb Q}(\mu)$. Another extremal case is when $a=b=c=-1$ (i.e. the four points of $E_{\omega}$ are simple poles), then ${\mathcal M}_{R}^{P\chi_{2}}={\mathbb Q}(j(\mu))$.


\end{document}